\DeclareMathOperator*{\essinf}{ess\,inf}
\DeclareMathOperator*{\esssup}{ess\,sup}
\DeclareMathOperator*{\argmin}{arg\,min}
\DeclareMathOperator*{\argmax}{arg\,max}
\newtheorem{theorem}{Theorem}[section]
\newtheorem{lemma}[theorem]{Lemma}
\newtheorem{proposition}[theorem]{Proposition}
\theoremstyle{definition}
\newtheorem{definition}{Definition}
\newtheorem{assumption}{Assumption}
\theoremstyle{remark}
\newtheorem{remark}[theorem]{Remark}
\newcommand{\msc}[1]{\textbf{MSC2010 Classification:} #1.}
\newcommand{\jel}[1]{\textbf{JEL Classification:} #1.}
\newcommand{\keywords}[1]{\textbf{Key words:} #1.}
\begin{document}
	\title{\textbf{A probabilistic verification theorem for the finite horizon two-player zero-sum optimal switching game in continuous time}}
	\author[1]{Said Hamad\`{e}ne}
	\author[2]{Randall Martyr\thanks{Corresponding author. Email: r.martyr@qmul.ac.uk}\thanks{Financial support received from the EPSRC via grant EP/N013492/1.}}
	\author[2]{John Moriarty\thanks{Financial support received from the EPSRC via grant EP/P002625/1.}}
	\affil[1]{Universit\'{e} du Maine, LMM, Avenue Olivier Messiaen, 72085 Le Mans, Cedex 9, France.}
	\affil[2]{School of Mathematical Sciences, Queen Mary University of London, Mile End Road, London E1 4NS, United Kingdom.}
	\maketitle
	\begin{abstract}
		In this paper we study continuous-time two-player zero-sum optimal switching games on a finite horizon. Using the theory of doubly reflected BSDEs with interconnected barriers, we show that this game has a value and an equilibrium in the players' switching controls.
	\end{abstract}
%%%%%%%%%%%%%%%%%%%%%%%%%%%%%%%%%%%%%%%%%%%%%%%%%%%%%%%%%%%%%%%%%%%%%%%%%%%%%%%%%%%%%%%%%%%%%%%%%%%%%%%%%%%%%%%%%%%%%%%%%%%%%%%%%%%%%

\msc{91A15, 91A55, 91A05, 93E20, 60G40, 49N25}
\vspace{+4pt}

\jel{C61, C72, C73}
\vspace{+8pt}

\noindent\keywords{optimal switching, optimal switching games, stopping times, optimal stopping problems, optimal stopping games, backward stochastic differential equations} 

\section{Zero-sum optimal switching game}
Optimal switching is a generalisation of optimal stopping which has various applications in economics and mathematical finance. It consists of one or more agents and a system which they control by successively switching the system's operational mode according to a discrete set of choices. There are several works on optimal switching problems in continuous time, and a survey of the literature identifies two main approaches: an analytical approach using partial differential equations (PDEs) and a probabilistic one. 

Methods based on PDEs and associated variational inequalities appeared as early as the 1970s, under the topic of impulsive control for diffusion processes (see \cite{Robin1976} and the references therein). A viscosity solutions approach to this type of PDE appeared in the late 1980s to early 1990s (for instance, \cite{Tang1993}) and is still the topic of active research \cite{Lundstrom2013a}.

Probabilistic solution methods were being applied since the 1970s and 1980s in various degrees of generality (see \cite{Bismut1979,Morimoto1987,Robin1976,Zabczyk1973,Zvonkin1971} for instance), and most of the recent research in this area has been a combination of the martingale approach via Snell envelopes (\cite{Djehiche2009,Martyr2014b}) and the theory of backward stochastic differential equations (BSDE) (\cite{Chassagneux2011,Elie2014,hu-tang2}).

All of the aforementioned references are concerned with single-person optimisation problems. Multiple-person optimal switching problems in a stochastic setting, the topic under which the present work falls, have been studied less frequently in the literature (there is related work for deterministic systems such as \cite{Yong1990,Yong1990a}). In the zero-sum setting there are previous works in continuous time including \cite{Tang2007,hu-tang,Djehiche2017} and, if viewed as a special case of impulse control games, \cite{Stettner1982b,Cosso2013}.

From the probabilistic point of view, the zero-sum switching game leads to the study of the following system of reflected BSDEs with inter-connected bilateral obstacles: Find a system of processes $(Y^{i,j},Z^{i,j},K^{i,j})_{(i,j)\in \Gamma}$ such that for any $(i,j)\in \Gamma$ and $s \in [0,T]$,  
\begin{equation}\label{eq:DRBSDE-Bilteral-Interconnected-Obstacles}
\begin{split}
(i) \quad & Y^{i,j}_{s} = h^{i,j} + \int_{s}^{T}f^{i,j}_{t}{d}t + K^{i,j}_{T} - K^{i,j}_{s} - \int_{s}^{T}Z^{i,j}_{t}{d}B_{t}\,; \\
(ii) \quad & Y^{i,j}_{s} \le U^{i,j}_{s}(\boldsymbol{Y})\;\text{and}\;Y^{i,j}_{s} \ge L^{i,j}_{s}(\boldsymbol{Y}) \,; \\
(iii) \quad & \int_{s}^{T}(Y^{i,j}_{t} - U^{i,j}_{t}(\boldsymbol{Y})){d}K^{i,j,-}_{t} = \int_{s}^{T}(L^{i,j}_{t}(\boldsymbol{Y}) - Y^{i,j}_{t}){d}K^{i,j,+}_{t} = 0,
\end{split}
\end{equation}
where: i) $\Gamma^1=\{1,...m_1\}$ and $\Gamma^2=\{1,...m_2\}$ are operational modes controlled by players 1 and 2 respectively, and $\Gamma=\Gamma^1\times \Gamma^2$ is the set of operational modes for the system; 
ii) $f^{i,j}$, $\hat{g}^{i,k}$ and $\check{g}^{j,\ell}$ (resp.  $h^{i,j}$) are given stochastic processes (resp. random variables) defining the game's economic data; iii) $K^{i,j}$ is a finite variation process and $K^{i,j,\pm}$ are the increasing processes in its orthogonal decomposition $K^{i,j} \coloneqq K^{i,j,+} - K^{i,j,-}$; iv) $\boldsymbol{Y}=(Y^{i,j})_{(i,j)\in \Gamma}$, $L^{i,j}_s(\boldsymbol{Y}) \coloneqq \max_{k\neq i, k\in \Gamma^1}\{Y^{k,j}_s-\hat{g}^{i,k}_{s}\}$ and 
$U^{i,j}_s(\boldsymbol{Y})\coloneqq \min_{\ell \neq j, j\in \Gamma^2}\{Y^{i,\ell}_s+\check{g}^{j,\ell}_{s}\}$.  

In the diffusion framework, randomness stems from an $\mathbb{R}^{k}$-valued diffusion process $X^{s,x} \coloneqq (X_t^{s,x})_{s \le t \le T}$, $(s,x) \in [0,T] \times \mathbb{R}^{k}$ is fixed, which satisfies: 
\begin{equation}\label{sdex}
\begin{cases}
\forall t\in [s,T],\,\,\, X_t^{s,x}=x+\int_{s}^{t}b(r,X^{s,x}_r)dr+\int_{s}^{t}\sigma(r,X^{s,x}_r)dB_r;\\
X_r^{s,x}=x,\,\,r\in [0,s].
\end{cases}
\end{equation}
In this setting, the system \eqref{eq:DRBSDE-Bilteral-Interconnected-Obstacles} is connected to the following {\it Hamilton-Jacobi-Bellman} system of PDEs with obstacles: For any $(i,j)\in \Gamma$ and $(s,x) \in [0,T] \times \mathbb{R}^{k}$,
\begin{equation} \label{mainsyst-vi}
\begin{cases}
   \min\Bigl\{v^{i,j}(s,x)-L^{i,j}(\boldsymbol{v})(s,x)\,,\, \max \bigl\{ v^{i,j}(s,x) -U^{i,j}(\boldsymbol{v})(s,x),\\
   \qquad \qquad \qquad \qquad \qquad \qquad \qquad \quad \bigl(-\partial_s-\mbox{L}^{X}\bigr)(v^{i,j})(s,x)-f^{i,j}(s,x) \bigr\} \Bigr\} = 0\, ;\\
v^{i,j}(T,x)=h^{i,j}(x),
\end{cases}
\end{equation} where $\mbox{L}^X$ is the generator associated with $X^{s,x}$; $\boldsymbol{v} = (v^{i,j})_{(i,j) \in \Gamma}$, $L^{i,j}(\boldsymbol{v}) \coloneqq \max_{k\neq i, k\in \Gamma^1}\{v^{k,j}-\hat{g}^{i,k}\}$ and $U^{i,j}(\boldsymbol{v}) \coloneqq \min_{\ell \neq j, j\in \Gamma^2}\{v^{i,\ell}+\check{g}^{j,\ell}\}$; $f^{i,j}$, $h^{i,j}$, $\hat{g}^{i,k}$ and $\check{g}^{j,\ell}$ are deterministic functions.

System \eqref{eq:DRBSDE-Bilteral-Interconnected-Obstacles} is studied, for example, in \cite{hu-tang} where it is shown that the solution exists when $\check{g}^{j,\ell}$ and $\hat{g}^{i,k}$ are constant. In the diffusion framework, it is also studied in \cite{Djehiche2017} where the authors have shown that the solution exists and is unique under rather mild regularity assumptions on the data. The connection, through the usual Feynman-Kac formula, with the viscosity solution to the system of PDEs \eqref{mainsyst-vi} is also established. However, the interpretation of $Y^{i,j}$ (or $v^{i,j}$) as the value of the underlying zero-sum switching game, as might be expected, is stated only in the case when $f^{i,j}$ and $h^{i,j}$ are {\it separated} with respect to $i$ and $j$: $f^{i,j}=f_1^i+f_2^j$ and $h^{i,j}=h_1^i+h_2^j$. The case when $f^{i,j}$ or $h^{i,j}$ are not separated is still open, and its resolution is the main objective of this work. In particular, we show that the solution of system \eqref{eq:DRBSDE-Bilteral-Interconnected-Obstacles}, when it exists (for example, in the diffusion framework), coincides with the value function of the zero-sum switching game. As a result, the unique viscosity solution to the system of PDEs \eqref{mainsyst-vi} coincides with the value function of the associated zero-sum switching game.  

This paper is organised as follows. In Section \ref{Section:Notation}, we introduce the zero-sum switching game. In Section \ref{Section:Verification}, we show the main result, that $Y^{i,j}$ coincides with the value of the zero-sum game. Moreover, we provide results on the existence of optimal strategies in the game. For completeness, we also interpret our findings in the diffusion framework.

\section{Probabilistic setup and notation}\label{Section:Notation}
We follow closely the setup in \cite{Djehiche2017}, working on a finite horizon $[0,T]$ and filtered probability space $(\Omega,\mathcal{F},\mathbb{F},\mathbb{P})$ where $\mathbb{F} = (\mathcal{F}_{t})_{0 \le t \le T}$ is the usual completion of the natural filtration of $B = (B_{t})_{0 \le t \le T}$, a $d$-dimensional standard Brownian motion.
\begin{itemize}
	\item Let $\mathcal{T}$ be the set of $\mathbb{F}$-stopping times bounded above by $T$, and for a given $\nu \in \mathcal{T}$, $\mathcal{T}_{\nu}$ the set of all $\tau \in \mathcal{T}$ satisfying $\tau \ge \nu$ a.s.
	\item For any sub-$\sigma$-algebra $\hat{\mathcal{F}}$ of $\mathcal{F}$, let $L^{p}(\hat{\mathcal{F}})$, $1 \le p < \infty$, denote the set of $p$-integrable $\hat{\mathcal{F}}$-measurable random variables, and set $L^{p} \coloneqq L^{p}(\mathcal{F})$.
	\item Let $\mathcal{H}^{2}$ be the set of $\mathbb{F}$-progressively measurable processes $w = (w_{t})_{0 \le t \le T}$ satisfying, $$\mathbb{E}\left[\int_{0}^{T}(w_{t})^{2}{d}t\right] < +\infty.$$
	\item Let $\mathcal{S}^{2}$ be the set of $\mathbb{F}$-adapted processes $w = (w_{t})_{0 \le t \le T}$ with paths that are right-continuous with left limits satisfying,
	\[
	\sup_{0 \le t \le T}|w_{t}| \in L^{2}.
	\]
	Let $\mathcal{S}^{2}_{c} \subset \mathcal{S}^{2}$ denote the subset of processes $w \in \mathcal{S}^{2}$ with continuous paths.
	\item Let $\mathcal{K}^{2}$ denote the set of $\mathbb{F}$-adapted right-continuous with left limits processes $K$ of finite variation satisfying $K_{0} = 0$ and,
	\[
	 \int_{0}^{T}|{d}K_{t}| \in L^{2},
	\]
	where $|{d}K_{t}(\omega)|$ is the total variation measure on $[0,T]$. Let $\mathcal{K}^{2}_{c}$ denote the subset of processes $K \in \mathcal{K}^{2}$ with continuous paths.
\end{itemize}

\begin{definition}\label{def:si}
	Let $Y$ be a right-continuous with left limits semi-martingale having decomposition $Y_{t} = Y_{0} + M_{t} + K_{t}$ where $M$ is a local martingale, $K$ has finite variation, and $M_{0} = K_{0} = 0$. Note that $M$ is continuous due to the choice of filtration $\mathbb{F}$ (see Lemma 14.5.2 of \cite{Cohen2015}). We say that $Y$ is square-integrable and write $Y \in \mathcal{W}^{2}$ if
	$$Y_{0} \in L^{2}, M \in \mathcal{S}^{2} \text{ and } K \in \mathcal{K}^{2}.$$
	If $Y$ is continuous then we write $Y \in \mathcal{W}^{2}_{c}$.
\end{definition}

Let $\mathcal{S}^{2,m}_{c}$ denote the $m$-product of $\mathcal{S}^{2}_{c}$. Similarly we define $\mathcal{H}^{2,m}$, $L^{2,m}$, $\mathcal{S}^{2,m}$, $\mathcal{K}^{2,m}$, \ldots, for the $m$-products of the spaces $\mathcal{H}^{2}$, $L^{2}$, $\mathcal{S}^{2}$, $\mathcal{K}^{2}$, and so on.

\subsection{Costs, rewards and switching controls}

Let $\Gamma^{k} = \{1,\ldots,m_{k}\}$, $k \in \{1,2\}$, be a finite, discrete set representing the operating modes that player $k$ can choose. Let $\Gamma = \Gamma^{1} \times \Gamma^{2}$ denote the product space of operating modes $\gamma=(\gamma^{(1)},\gamma^{(2)})$, having cardinality $|\Gamma| = m = m_{1} \times m_{2}$.

\begin{itemize}
	\item For $(i,j) \in \Gamma$, $f^{i,j} \in \mathcal{H}^{2}$ defines a running reward paid by player 2 to player 1 and $h^{i,j} \in L^{2}(\mathcal{F}_{T})$ a terminal reward paid by player 2 to player 1, when player 1's (resp. player 2's) active mode is $i$ (resp. $j$).
	\item For $i_{1},i_{2} \in \Gamma^{1}$, $\hat{g}^{i_{1},i_{2}} \in \mathcal{S}^{2}_{c}$ defines a non-negative payment from player 1 to player 2 when the former switches from $i_{1}$ to $i_{2}$.
	\item For $j_{1},j_{2} \in \Gamma^{2}$, $\check{g}^{j_{1},j_{2}} \in \mathcal{S}^{2}_{c}$ defines a non-negative payment from player 2 to player 1 when the former switches from mode $j_{1}$ to $j_{2}$.
\end{itemize}

For all $(i,j) \in \Gamma$ and $t \in [0,T]$ we set $\hat{g}^{i,i}_{t} = \check{g}^{j,j}_{t} = 0$.

\subsubsection{Individual switching controls and strategies}

\begin{definition}[Switching controls and their indicator functions]\label{Definition:Switching-Controls}
	A \it{control} for player $1$ is a sequence $\alpha = (\sigma_{n},\xi_{n})_{n \ge 0}$ such that,
	\begin{enumerate}
		\item for all $n \ge 0$, $\sigma_{n} \in \mathcal{T}$ and is such that $\sigma_{n} \le \sigma_{n+1}$, $\mathbb{P}$-a.s., and $\mathbb{P}(\{\sigma_{n} < T\; \forall n \ge 0\}) = 0$;
		\item for all $n \ge 0$, $\xi_{n}$ is an $\mathcal{F}_{\sigma_{n}}$-measurable $\Gamma^{1}$-valued random variable;
		\item for $n \ge 1$, on $\{\sigma_{n} < T\}$ we have $\sigma_{n} < \sigma_{n+1}$ and $\xi_{n} \neq \xi_{n-1}$, while on $\{\sigma_{n} = T\}$ we have $\xi_{n} = \xi_{n-1}$.
	\end{enumerate}
	Let $\mathsf{A}$ denote the set of controls for player $1$. The set $\mathsf{B}$ of controls $\beta = \left(\tau_{n},\zeta_{n}\right)_{n \ge 0}$ for player $2$, where the $\zeta_{n}$ are $\Gamma^{2}$-valued, is defined analogously. Denoting by $C^{\alpha}_{N}$ the cost of the first $N \ge 1$ switches,
	\[
	C^{\alpha}_{N} \coloneqq \sum_{n=1}^{N}\hat{g}^{\xi_{n-1},\xi_{n}}_{\sigma_{n}},
	\]
	note that the limit $\lim\limits_{N \to \infty}C^{\alpha}_{N}$ is well defined.
\end{definition}

\begin{definition}
	A control $\alpha \in \mathsf{A}$ for player 1 is said to be \it{square-integrable} if,
	\[
	\lim_{N \to \infty}C^{\alpha}_{N} \in L^{2}.
	\]
	Let $\mathcal{A}$ denote the set of such controls. Similarly, the set $\mathcal{B}$ of square-integrable controls for player 2 consists of those $\beta \in \mathsf{B}$ satisfying,
	\[
	\lim_{N \to \infty}C^{\beta}_{N} \in L^{2},
	\]
	where
	\[
	C^{\beta}_{N} \coloneqq \sum_{n=1}^{N}\check{g}^{\zeta_{n-1},\zeta_{n}}_{\tau_{n}}.
	\]
\end{definition}

\begin{definition}[Non-anticipative switching strategies]\label{Definition:Switching-Strategies}
	Let $s \in [0,T]$ and $\nu \in \mathcal{T}_{s}$. Two controls $\alpha^{1},\alpha^{2} \in \mathsf{A}$ with $\alpha^{1} = \bigl(\sigma^{1}_{n},\xi^{1}_{n}\bigr)_{n \ge 0}$ and $\alpha^{2} = \bigl(\sigma^{2}_{n},\xi^{2}_{n}\bigr)_{n \ge 0}$ are said to be {\it equivalent}, denoting this by $\alpha^{1} \equiv \alpha^{2}$, {\it on $[s,\nu]$} if we have a.s.,
	\[
	\xi^{1}_{0}\mathbf{1}_{[\sigma^{1}_{0},\sigma^{1}_{1}]}(t) + \sum\limits_{n \ge 1}\xi^{1}_{n}\mathbf{1}_{(\sigma^{1}_{n},\sigma^{1}_{n+1}]}(t) = \xi^{2}_{0}\mathbf{1}_{[\sigma^{2}_{0},\sigma^{2}_{1}]}(t) + \sum\limits_{n \ge 1}\xi^{2}_{n}\mathbf{1}_{(\sigma^{2}_{n},\sigma^{2}_{n+1}]}(t), \quad s \le t \le \nu.
	\]
	A {\it non-anticipative strategy} for player $1$ is a mapping $\overline{\alpha} \colon \mathsf{B} \to \mathsf{A}$ such that:
	\begin{itemize}
		\item {\it Non-anticipativity:} for any $s \in [0,T]$, $\nu \in \mathcal{T}_{s}$, and $\beta^{1},\beta^{2} \in \mathsf{B}$ such that $\beta^{1} \equiv \beta^{2}$ on $[s,\nu]$, we have $\overline{\alpha}(\beta^{1}) \equiv \overline{\alpha}(\beta^{2})$ on $[s,\nu]$.
		\item {\it Square-integrability:} for any $\beta \in \mathcal{B}$ we have $\overline{\alpha}(\beta) \in \mathcal{A}$.
	\end{itemize}
	In a similar manner we define non-anticipative strategies for player 2. Let $\mathscr{A}$ and $\mathscr{B}$ denote the set of non-anticipative strategies for players 1 and 2 respectively.
\end{definition}
\begin{definition}
	For $s \in [0,T]$ and $i \in \Gamma^{1}$, let $\mathsf{A}_{s}^{i}$ denote the set of controls $\alpha \in \mathsf{A}$ satisfying $\xi_{0} = i$ and $\sigma_{0} = s$. Similarly, define $\mathsf{B}_{s}^{j}$ for $s \in [0,T]$ and $j \in \Gamma^{2}$. Analogous notation will be used below for other classes of controls, for example square-integrable controls $\mathcal{A}_{s}^{i}$, $\mathcal{B}_{s}^{j}$, and strategies $\mathscr{A}^{i}_{s}$, $\mathscr{B}^{j}_{s}$.
\end{definition}

\subsubsection{Coupling of controls}

We now define the coupling of two controls $\alpha \in \mathsf{A}$ and $\beta \in \mathsf{B}$ under the following assumption: player 1's switch is implemented first if both players decide to switch at the same instant.

\begin{definition}\label{Definition:Coupled-Control}
	Given controls $\alpha \in \mathsf{A}$ and $\beta \in \mathsf{B}$, define the {\it coupling} $\gamma(\alpha,\beta) = (\rho_{n},\gamma_{n})_{n \ge 0}$ where $\rho_{n} \in \mathcal{T}$ is defined by,
	\begin{equation}\label{eq:coupled-switching-times}
	\rho_{n} = \sigma_{r_{n}} \wedge \tau_{s_{n}},
	\end{equation}
	with $r_{0} = s_{0} = 0$, $r_{1} = s_{1} = 1$ and for $n \ge 2$,
	\[
	r_{n} = r_{n-1} + \mathbf{1}_{\{\sigma_{r_{n-1}} \le\, \tau_{s_{n-1}}\}}, \quad s_{n} = s_{n-1} +  \mathbf{1}_{\{\tau_{s_{n-1}} <\, \sigma_{r_{n-1}}\}},
	\]
	and $\gamma_{n}$ is a $\Gamma$-valued random variable such that
	$\gamma_{0} = (\xi_{0},\zeta_{0})$ and for $n \ge 1$,
	\begin{equation}\label{eq:coupled-switching-modes}
	\gamma_{n} = \begin{cases}
	\bigl(\xi_{r_{n}},\gamma_{n-1}^{(2)}\bigr), & \text{on}\; \{\sigma_{r_{n}} \le \tau_{s_{n}},\; \sigma_{r_{n}} < T\} \\
	\bigl(\gamma_{n-1}^{(1)},\zeta_{s_{n}}\bigr), & \text{on}\; \{\tau_{s_{n}} < \sigma_{r_{n}} \} \\
	\gamma_{n-1}, & \text{on}\; \{\tau_{s_{n}} = \sigma_{r_{n}} = T\}.
	\end{cases}
	\end{equation}
	Define for all $0 \le t \le T$,
	\begin{equation}\label{eq:Optimal-Switching-Indicator-Continuous-Time}
	u_{t} = \gamma_{0}\mathbf{1}_{[\rho_{0},\rho_{1}]}(t) + \sum\limits_{n \ge 1}\gamma_{n}\mathbf{1}_{(\rho_{n},\rho_{n+1}]}(t),
	\end{equation}
\end{definition}
\noindent where $(\rho_{n},\rho_{n+1}]=\emptyset$ on $\{\rho_{n} = \rho_{n+1}\}$.

Note that the coupling $\gamma(\alpha,\beta) = (\rho_{n},\gamma_{n})_{n \ge 0}$ of the controls $\alpha \in \mathsf{A}^{i}_{s}$ and $\beta \in \mathsf{B}^{j}_{s}$ has the following properties:
\begin{enumerate}
	\item $\rho_{0} = s$ and for all $n \ge 0$ we have $\rho_{n} \in \mathcal{T}$ and $\rho_{n} \le \rho_{n+1}$  $\mathbb{P}$-a.s., and $\mathbb{P}(\{\rho_{n} < T\; \forall n \ge 0\}) = 0$;
	\item $\gamma_{0} = (i,j)$ and for all $n \ge 0$ the random variable $\gamma_{n}$ is $\mathcal{F}_{\rho_{n}}$-measurable, $\Gamma$-valued and $\gamma_{n+1} \neq \gamma_{n}$ on $\{\rho_{n+1} < T\}$.
\end{enumerate}
Write $C^{\gamma(\alpha,\beta)}_{N}$ for the joint cumulative cost of the first $N$ switches,
	\[
	C^{\gamma(\alpha,\beta)}_{N} =  \sum_{n=1}^{N}\Bigl[\,\hat{g}^{\gamma_{n-1}^{(1)},\gamma_{n}^{(1)}}_{\rho_{n}} - \check{g}^{\gamma_{n-1}^{(2)},\gamma_{n}^{(2)}}_{\rho_{n}}\Bigr], \qquad N \ge 1.
	\]	
\begin{definition}\label{Definition:Joint-Admissibility}
The coupling $\gamma(\alpha,\beta) = (\rho_{n},\gamma_{n})_{n \ge 0}$ of the controls $\alpha \in \mathsf{A}^{i}_{s}$ and $\beta \in \mathsf{B}^{j}_{s}$ is said to be {\it admissible}, writing $\gamma(\alpha,\beta) \in \mathcal{G}^{i,j}_{s}$ to indicate this, if
$\sup_{N \ge 1}\bigl|C^{\gamma(\alpha,\beta)}_{N}\bigr| \in L^{2}$. 
\end{definition}

Note that for every $\alpha \in \mathsf{A}$ and $\beta \in \mathsf{B}$ we have $\lim\limits_{N \to \infty}C^{\gamma(\alpha,\beta)}_{N} = \lim\limits_{N \to \infty}C^{\alpha}_{N} - \lim\limits_{N \to \infty}C^{\beta}_{N}$. Using the triangle inequality, we see that every pair of square-integrable controls $(\alpha,\beta)$, $\alpha \in \mathcal{A}^{i}_{s}$ and $\beta \in \mathcal{B}^{j}_{s}$, satisfies $\gamma(\alpha,\beta) \in \mathcal{G}^{i,j}_{s}$.

\subsection{The zero-sum switching game}\label{Section:Zero-Sum-Example}

For the zero-sum game we assume that player $1$ is the maximiser and define the total reward from its perspective. Letting $(s,i,j) \in [0,T) \times \Gamma$ be the initial state and recalling \eqref{eq:Optimal-Switching-Indicator-Continuous-Time}, we have
\begin{multline}
J_{s}^{i,j}(\gamma(\alpha,\beta)) = \mathbb{E}\left[\int_{s}^{T}f^{u_{t}}_{t}{d}t - \sum_{n=1}^{\infty}\Bigl[\,\hat{g}^{\gamma_{n-1}^{(1)},\gamma_{n}^{(1)}}_{\rho_{n}} - \check{g}^{\gamma_{n-1}^{(2)},\gamma_{n}^{(2)}}_{\rho_{n}}\Bigr] + h^{u_{T}} \Bigm \vert \mathcal{F}_{s}\right], \\
\quad \alpha \in \mathsf{A}^{i}_{s}, \enskip \beta \in \mathsf{B}^{j}_{s}.\label{eq:Zero-Sum-Performance-Functional}
\end{multline}

The {\it lower and upper values} for this game, denoted respectively by $\check{V}^{i,j}_{s}$ and $\hat{V}^{i,j}_{s}$, are defined as follows:
\begin{equation}\label{Definition:Optimal-Switching-Game-Lower-Upper-Values}
\begin{cases}
\check{V}^{i,j}_{s} \coloneqq \esssup\limits_{\alpha \in \mathcal{A}_{s}^{i}}\essinf\limits_{\beta \in \mathcal{B}_{s}^{j}}J^{i,j}_{s}(\gamma(\alpha,\beta)) \\
\hat{V}^{i,j}_{s} \coloneqq \essinf\limits_{\beta \in \mathcal{B}_{s}^{j}}\esssup\limits_{\alpha \in \mathcal{A}_{s}^{i}}J^{i,j}_{s}(\gamma(\alpha,\beta)).
\end{cases}
\end{equation}
Note that $\check{V}^{i,j}_{s} \le \hat{V}^{i,j}_{s}$ a.s.

\begin{definition}\label{Definition:Switching-Game-Value}
	The game is said to have a {\it value at $(s,i,j)$} if 
	\begin{equation}\label{eq:Switching-Game-Value}
	\check{V}^{i,j}_{s} = \hat{V}^{i,j}_{s} \quad \text{a.s.}
	\end{equation}
	The common value $V^{i,j}_{s}$, when it exists, is referred to as the game's {\it solution at $(s,i,j)$}. When $s = T$ we formally set $\check{V}^{i,j}_{T} = \hat{V}^{i,j}_{T} = h^{i,j}$.
\end{definition}

In this paper we construct a pair of controls $(\alpha^{*},\beta^{*}) \in \mathsf{A}^{i}_{s} \times \mathsf{B}^{j}_{s}$ such that $\gamma(\alpha^{*},\beta^{*}) \in \mathcal{G}^{i,j}_{s}$ and the game has a value $V^{i,j}_{s} = J^{i,j}_{s}(\gamma(\alpha^{*},\beta^{*}))$ (see Theorem~\ref{Theorem:Verification} below). Such a result was obtained in \cite{Djehiche2017} under the assumption $f^{i,j} = f^{i}_{1} + f^{j}_{2}$, $h^{i,j} = h^{i}_{1} + h^{j}_{2}$ for $(i,j) \in \Gamma$. Our result is obtained by dynamic programming and the connection between doubly reflected backward stochastic differential equations (DRBSDEs) with implicitly defined barriers and zero-sum optimal stopping games. We also prove the existence of optimal non-anticipative strategies $\overline{\alpha^{*}} \in \mathscr{A}^{i}_{s}$ and $\overline{\beta^{*}} \in \mathscr{B}^{j}_{s}$ which are {\it robust} in the sense that each is a best response to the worst-case opponent.

\subsection{Assumptions}

\begin{definition}\label{Definition:Loop}
	For $N \ge 2$ a {\it loop} in $\Gamma$ of length $N-1$ is a sequence $\{(i_{1},j_{1}),\ldots,(i_{N},j_{N})\}$ of elements in $\Gamma$ with $N-1$ distinct members such that $(i_{N},j_{N}) = (i_{1},j_{1})$ and either $i_{q+1} = i_{q}$ or $j_{q+1} = j_{q}$ for any $q = 1,\ldots,N-1$.
\end{definition}
 
Throughout this paper we make the following assumptions, which are closely related to those in \cite{Djehiche2017}:
\begin{assumption}\label{Assumption:Switching-Costs}
	We impose the following conditions on the switching costs:
	\begin{enumerate}
		\item {\it Non-negativity}: $\min\limits_{i_{1} \in \Gamma^{1}}\hat{g}^{i,i_{1}} \ge 0$ and $\min\limits_{j_{1} \in \Gamma^{2}}\check{g}^{j,j_{1}} \ge 0$ for all $i \in \Gamma^{1}$, $j \in \Gamma^{2}$.
		\item{\it Consistency:}  
		\begin{enumerate}
\item For all sequences $\{i_{1},i_{2},i_{3}\} \in \Gamma^{1}$ and $\{j_{1},j_{2},j_{3}\} \in \Gamma^{2}$ with $i_{1} \neq i_{2}$, $i_{2} \neq i_{3}$ and $j_{1} \neq j_{2}$, $j_{2} \neq j_{3}$, we have for all $t \in [0,T]$,
		\begin{equation}\label{eq:SwitchingCostNoArbitrage-1}
		\hat{g}^{i_1,i_3}_{t} < \hat{g}^{i_1,i_2}_{t} + \hat{g}^{i_2,i_3}_{t}\;\; \mathbb{P}\text{-a.s.} \enskip \text{and} \enskip \check{g}^{j_1,j_3}_{t} < \check{g}^{j_1,j_2}_{t} + \check{g}^{j_2,j_3}_{t}\;\; \mathbb{P}\text{-a.s.}
		\end{equation}

\item For all $(i,j) \in \Gamma$ we have,
		\begin{equation}
		\max_{i_{1} \neq i, i_{1} \in \Gamma^{1}}\{h^{i_{1},j} - \hat{g}^{i,i_{1}}_{T}\} \le h^{i,j} \le \min_{j_{1} \neq j, j_{1} \in \Gamma^{2}}\{h^{i,j_{1}} + \check{g}^{j,j_{1}}_{T}\} \;\; \mathbb{P}\text{-a.s.}
		\end{equation}
	\end{enumerate}
			\item {\it Non-free loop property:}	For any loop $\{(i_{1},j_{1}),\ldots,(i_{N},j_{N})\}$ in $\Gamma$ we have for all $t \in [0,T]$,
		\begin{equation}\label{eq:SwitchingCostNoArbitrage-2}
		\sum_{q = 1}^{N-1}\varphi^{q,q+1}_{t} \neq 0 \;\;\mathbb{P}\text{-a.s.},
		\end{equation}
		where
		$\varphi^{q,q+1}_{t} = -\hat{g}^{i_{q},i_{q+1}}_{t}\mathbf{1}_{\{i_{q} \neq i_{q+1}\}} + \check{g}^{j_{q},j_{q+1}}_{t}\mathbf{1}_{\{j_{q} \neq j_{q+1}\}}$.
		\end{enumerate}
		\end{assumption}

\section{A probabilistic verification theorem for the zero-sum game}\label{Section:Verification}

Theorem~\ref{Theorem:Verification} uses the system \eqref{eq:DRBSDE-Bilteral-Interconnected-Obstacles} to prove the existence of a value for the zero-sum game. Recall that $m = |\Gamma|$ is the number of joint operating modes $(i,j) \in \Gamma$. For $(i,j) \in \Gamma$ define the {\it lower and upper switching operators}, $L^{i,j} \colon \mathcal{S}^{2,m}_{c} \to \mathcal{S}^{2}_{c}$ and $U^{i,j} \colon \mathcal{S}^{2,m}_{c} \to \mathcal{S}^{2}_{c}$ respectively, as follows: for $\boldsymbol{Y} \in \mathcal{S}^{2,m}_{c}$,
\begin{equation}\label{eq:Implicit-Obstacles}
\begin{cases}
L^{i,j}(\boldsymbol{Y}) = \max\limits_{i_{1} \neq i, i_{1} \in \Gamma^{1}}\{Y^{i_{1},j} - \hat{g}^{i,i_{1}}\}, \\
U^{i,j}(\boldsymbol{Y}) = \min\limits_{j_{1} \neq j, j_{1} \in \Gamma^{2}}\{Y^{i,j_{1}} + \check{g}^{j,j_{1}}\}.
\end{cases}
\end{equation}
Let $\boldsymbol{L} \colon \mathcal{S}^{2,m}_{c} \to \mathcal{S}^{2,m}_{c}$ and $\boldsymbol{U} \colon \mathcal{S}^{2,m}_{c} \to \mathcal{S}^{2,m}_{c}$ be the operators defined, using matrix notation, by $\boldsymbol{L} = (L^{i,j})_{(i,j) \in \Gamma}$ and $\boldsymbol{U} = (U^{i,j})_{(i,j) \in \Gamma}$. The following definition formalises the concept of a solution to \eqref{eq:DRBSDE-Bilteral-Interconnected-Obstacles}.

\begin{definition}\label{Definition:DRBSDE}
A {\it solution} to the system of DRBSDEs with terminal value $\boldsymbol{h} \in L^{2,m}(\mathcal{F}_{T})$, driver $\boldsymbol{f} \in \mathcal{H}^{2,m}$, and implicit barriers $\boldsymbol{L}$ and $\boldsymbol{U}$, is a triple $(\boldsymbol{Y},\boldsymbol{Z},\boldsymbol{K}) \in \mathcal{S}^{2,m}_{c} \times \mathcal{H}^{2,m} \times \mathcal{K}^{2,m}_{c}$ such that a.s. for all $(i,j) \in \Gamma$ and all $0 \le s \le T$,
\begin{equation}\tag{\ref{eq:DRBSDE-Bilteral-Interconnected-Obstacles} revisited}
\begin{split}
(i) \quad & Y^{i,j}_{s} = h^{i,j} + \int_{s}^{T}f^{i,j}_{t}{d}t + K^{i,j}_{T} - K^{i,j}_{s} - \int_{s}^{T}Z^{i,j}_{t}{d}B_{t}\,; \\
(ii) \quad & Y^{i,j}_{s} \le U^{i,j}_{s}(\boldsymbol{Y})\;\text{and}\;Y^{i,j}_{s} \ge L^{i,j}_{s}(\boldsymbol{Y})\,; \\
(iii) \quad & \int_{s}^{T}(Y^{i,j}_{t} - U^{i,j}_{t}(\boldsymbol{Y})){d}K^{i,j,-}_{t} = \int_{s}^{T}(L^{i,j}_{t}(\boldsymbol{Y}) - Y^{i,j}_{t}){d}K^{i,j,+}_{t} = 0,
\end{split}
\end{equation}
where $K^{i,j,+}$ and $K^{i,j,-}$ are the increasing processes in the orthogonal decomposition $K^{i,j} \coloneqq K^{i,j,+} - K^{i,j,-}$.
\end{definition}

Note that for any solution to \eqref{eq:DRBSDE-Bilteral-Interconnected-Obstacles}, the stochastic integral $\int_{0}^{t}Z^{i,j}_{s}{d}B_{s}$ is well-defined, and is a martingale belonging to $\mathcal{S}^{2}_{c}$ (see Chapter 3 of \cite{Chung2014}).
\begin{theorem}\label{Theorem:Verification}
	Suppose there exists a solution $(\boldsymbol{Y},\boldsymbol{Z},\boldsymbol{K})$ to the DRBSDE \eqref{eq:DRBSDE-Bilteral-Interconnected-Obstacles}. For every initial state $(s,i,j) \in [0,T] \times \Gamma$,
	\begin{enumerate}[(i)]
		\item {\it Existence of value:} the switching game has a value with,
		\begin{equation}\label{eq:Verification-Ansatz}
		Y^{i,j}_{s} = V^{i,j}_{s}\;\;\text{a.s.}
		\end{equation}
		\item {\it Existence of optimal controls:} there exists a pair of controls $(\alpha^{*},\beta^{*}) \in \mathsf{A}^{i}_{s} \times \mathsf{B}^{j}_{s}$ such that $\gamma(\alpha^{*},\beta^{*}) \in \mathcal{G}^{i,j}_{s}$ and $V^{i,j}_{s} = J^{i,j}_{s}(\gamma(\alpha^{*},\beta^{*}))$ a.s.
		\item {\it Existence of optimal strategies:} there exist non-anticipative strategies $\overline{\alpha^{*}} \in \mathscr{A}^{i}_{s}$ and $\overline{\beta^{*}}  \in \mathscr{B}^{j}_{s}$ that are optimal in the robust sense:
		\[
		\begin{cases}
		\essinf\limits_{\beta \in \mathcal{B}^{j}_{s}} J^{i,j}_{s}\bigl(\gamma(\overline{\alpha^{*}}(\beta),\beta)\bigr) = \esssup\limits_{\overline{\alpha} \in \mathscr{A}^{i}_{s}}\essinf\limits_{\beta \in \mathcal{B}^{j}_{s}} J^{i,j}_{s}\bigl(\gamma(\overline{\alpha}(\beta),\beta)\bigr) \\
		\esssup\limits_{\alpha \in \mathcal{A}^{i}_{s}} J^{i,j}_{s}\bigl(\gamma(\alpha,\overline{\beta^{*}}(\alpha))\bigr) = \essinf\limits_{\overline{\beta} \in \mathscr{B}^{j}_{s}}\esssup\limits_{\alpha \in \mathcal{A}^{i}_{s}} J^{i,j}_{s}\bigl(\gamma(\alpha,\overline{\beta}(\alpha))\bigr)
		\end{cases}
		\]
		Furthermore, these robust values are equal to the game's value,
		\[
		\esssup\limits_{\overline{\alpha} \in \mathscr{A}^{i}_{s}}\essinf\limits_{\beta \in \mathcal{B}^{j}_{s}} J^{i,j}_{s}\bigl(\gamma(\overline{\alpha}(\beta),\beta)\bigr) = V^{i,j}_{s} = \essinf\limits_{\overline{\beta} \in \mathscr{B}^{j}_{s}}\esssup\limits_{\alpha \in \mathcal{A}^{i}_{s}} J^{i,j}_{s}\bigl(\gamma(\alpha,\overline{\beta}(\alpha))\bigr).
		\]
	\end{enumerate}
\end{theorem}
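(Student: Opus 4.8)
The plan is to read the system \eqref{eq:DRBSDE-Bilteral-Interconnected-Obstacles} as a recursive family of zero-sum stopping games --- one for each occupied mode --- in the spirit of the DRBSDE/optimal-stopping-game connection recalled in the introduction, and to convert the optimal stopping rules of these games into the players' switching controls. I would begin by recording the structural facts that drive everything. The Skorokhod conditions in (iii) force $K^{i,j,+}$ to increase only on $\{Y^{i,j}=L^{i,j}(\boldsymbol Y)\}$ and $K^{i,j,-}$ only on $\{Y^{i,j}=U^{i,j}(\boldsymbol Y)\}$. The strict subadditivity in \eqref{eq:SwitchingCostNoArbitrage-1}, together with the obstacle inequalities (ii), guarantees the \emph{no-immediate-re-switch} property: after the maximiser switches $i\to i'$ at a time when $Y^{i,j}=L^{i,j}(\boldsymbol Y)$ to an index attaining the maximum, one has $Y^{i',j}>L^{i',j}(\boldsymbol Y)$ strictly, and symmetrically for the minimiser. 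This pins down the candidate switching rule: the maximiser switches when $Y^{i,j}$ reaches its lower barrier $L^{i,j}(\boldsymbol Y)$, moving to an attaining index, and the minimiser when $Y^{i,j}$ reaches $U^{i,j}(\boldsymbol Y)$.

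I would then build the candidate controls and strategies by iterating the corresponding hitting times. Starting from $(s,i,j)$, set $\rho_1=\tau\wedge\sigma$ with $\tau=\inf\{t\ge s:Y^{i,j}_t=L^{i,j}_t(\boldsymbol Y)\}\wedge T$ and $\sigma=\inf\{t\ge s:Y^{i,j}_t=U^{i,j}_t(\boldsymbol Y)\}\wedge T$, let the agent whose barrier is hit switch to an attaining index, and restart from the new mode at $\rho_1$; concatenating the maximiser's switches gives the control $\alpha^*$, the minimiser's gives $\beta^*$, and recomputing each agent's hitting times along an arbitrary opponent trajectory defines the non-anticipative strategies $\overline{\alpha^*}$ and $\overline{\beta^*}$. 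The step I expect to be the main obstacle is showing that this recursion performs only finitely many switches on $[s,T]$ almost surely, so that $(\alpha^*,\beta^*)\in\mathsf A^i_s\times\mathsf B^j_s$ and the strategies are well defined. Here the no-immediate-re-switch property excludes two switches at one instant, while the non-free-loop property \eqref{eq:SwitchingCostNoArbitrage-2} excludes an accumulation of switches: an accumulation before $T$ would, by the pigeonhole principle applied to the finite mode set, complete a loop through $\Gamma$, and continuity of $\boldsymbol Y$ and of the obstacles would force the net switching cost around that loop to vanish in the limit, contradicting \eqref{eq:SwitchingCostNoArbitrage-2}. Once finiteness is established, admissibility $\gamma(\alpha^*,\beta^*)\in\mathcal G^{i,j}_s$ follows from the $L^2$ bounds in Definition~\ref{Definition:DRBSDE} and the triangle-inequality remark preceding Section~\ref{Section:Zero-Sum-Example}.

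With admissibility in hand the verification is a single telescoping/martingale computation. For a coupling $\gamma(\alpha,\beta)=(\rho_n,\gamma_n)_{n\ge0}$ with mode process $u$ as in \eqref{eq:Optimal-Switching-Indicator-Continuous-Time}, I track
\[
\Xi_t = Y^{u_t}_t + \int_s^t f^{u_r}_r\,{d}r - \sum_{n:\,\rho_n\le t}\Bigl[\hat g^{\gamma_{n-1}^{(1)},\gamma_n^{(1)}}_{\rho_n} - \check g^{\gamma_{n-1}^{(2)},\gamma_n^{(2)}}_{\rho_n}\Bigr],
\]
so that $\Xi_s=Y^{i,j}_s$ and, using $Y^{i,j}_T=h^{i,j}$, $\mathbb E[\Xi_T\mid\mathcal F_s]=J^{i,j}_s(\gamma(\alpha,\beta))$. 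Substituting (i) on each interval $(\rho_n,\rho_{n+1}]$ shows that between switches $\Xi$ evolves by $-{d}K^{+}+{d}K^{-}+Z\,{d}B$ in the occupied mode, that a maximiser switch $i\to i'$ increments $\Xi$ by $Y^{i',j}-Y^{i,j}-\hat g^{i,i'}\le 0$ (from $Y^{i,j}\ge L^{i,j}(\boldsymbol Y)$) and that a minimiser switch $j\to j'$ increments it by $Y^{i,j'}-Y^{i,j}+\check g^{j,j'}\ge 0$ (from $Y^{i,j}\le U^{i,j}(\boldsymbol Y)$), each increment vanishing exactly when the target is an attaining index.

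These signs yield the three regimes. When player 2 follows $\overline{\beta^*}$, on every occupied mode the upper barrier is met only at the switching instant, so ${d}K^{-}=0$ there and every minimiser jump vanishes; hence $\Xi$ is a supermartingale and $J^{i,j}_s(\gamma(\alpha,\overline{\beta^*}(\alpha)))\le Y^{i,j}_s$ for every $\alpha$. Symmetrically $\overline{\alpha^*}$ makes $\Xi$ a submartingale, giving $J^{i,j}_s(\gamma(\overline{\alpha^*}(\beta),\beta))\ge Y^{i,j}_s$ for every $\beta$. Along $(\alpha^*,\beta^*)$ all drift and jump terms vanish, so $\Xi$ is a martingale and $J^{i,j}_s(\gamma(\alpha^*,\beta^*))=Y^{i,j}_s$, which is part (ii). Embedding each control as a constant strategy turns the two bounds into $\hat V^{i,j}_s\le Y^{i,j}_s$ and $\check V^{i,j}_s\ge Y^{i,j}_s$, so with $\check V^{i,j}_s\le\hat V^{i,j}_s$ one gets $\check V^{i,j}_s=\hat V^{i,j}_s=Y^{i,j}_s=V^{i,j}_s$, which is part (i). For part (iii), that a particular strategy attains the outer extremum is trivial, while the matching outer inequality uses that against $\overline{\beta^*}$ any player-1 strategy induces --- switch by switch --- an admissible control pair to which the supermartingale bound applies (and symmetrically for $\overline{\alpha^*}$); since $\overline{\alpha^*}(\beta^*)\equiv\alpha^*$ and $\overline{\beta^*}(\alpha^*)\equiv\beta^*$ these robust values equal $J^{i,j}_s(\gamma(\alpha^*,\beta^*))=V^{i,j}_s$.
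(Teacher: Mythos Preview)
Your overall strategy coincides with the paper's: build $(\alpha^*,\beta^*)$ and the non-anticipative strategies $\overline{\alpha^*},\overline{\beta^*}$ by iterating the barrier hitting times of the DRBSDE solution, then verify by telescoping the BSDE dynamics along the resulting coupling. Your process $\Xi$ is simply a compact repackaging of the paper's iterative substitution (its displays leading to \eqref{eq:Integrate-BSDE-Optimal-Control-1}--\eqref{eq:Integrate-BSDE-Optimal-Control-3} and \eqref{eq:Upper-Bound-Player-1-Suboptimal}--\eqref{eq:One-Sided-Verification-N-th-Step}); the super-/sub-/martingale trichotomy you describe is exactly what those computations encode, so there is no genuine difference in method.

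The real gap is in your treatment of integrability, which is precisely the point the paper works hardest at. Almost-sure finiteness of the switching sequence does \emph{not} yield $\alpha^*\in\mathcal A^i_s$, $\beta^*\in\mathcal B^j_s$, or even $\gamma(\alpha^*,\beta^*)\in\mathcal G^{i,j}_s$: the number of switches is a.s.\ finite but unbounded, so the cumulative cost need not lie in $L^2$, and the triangle-inequality remark you cite applies only \emph{after} each control is already known to be square-integrable. The paper instead rearranges the $N$-step BSDE identity to bound $\sup_N\lvert C^{\gamma(\alpha^*,\beta^*)}_N\rvert$ by an $L^2$ quantity directly (its \eqref{eq:Integrate-BSDE-Optimal-Control-2}--\eqref{eq:Integrate-BSDE-Optimal-Control-3}); your $\Xi$-identity would deliver the same bound, but it must be invoked \emph{before}, not after, the admissibility claim. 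More importantly, for $\overline{\alpha^*}\in\mathscr A^i_s$ and $\overline{\beta^*}\in\mathscr B^j_s$ you must show $\overline{\beta^*}(\alpha)\in\mathcal B^j_s$ for every $\alpha\in\mathcal A^i_s$ (and symmetrically); this is the content of the paper's Lemma~\ref{Lemma:One-Sided-Optimal-Non-anticipative-Strategy-Player-2}(i), obtained from the one-sided \emph{inequality} version of the telescoping, where player~2's cumulative cost is bounded by an $L^2$ expression containing $\sum_n\hat g^{\xi_{n-1},\xi_n}_{\sigma_n}$ --- in $L^2$ exactly because $\alpha\in\mathcal A^i_s$. Without this, your super-/submartingale bounds are asserted over a class whose admissibility has not been established, and the passage to the inequalities on $\hat V^{i,j}_s$ and $\check V^{i,j}_s$ is unjustified. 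The paper also uses a truncation $\alpha^*_k\to\alpha^*$ to show the robust extremum is \emph{attained}, since $\alpha^*$ is only proved jointly admissible with $\beta^*$; your sketch elides this step too.
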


This concept of robustness, which is well known in the optimal control and differential games literature \cite{Krasovskii1988, Ball2002, Bayraktar2016robust}, is natural in the context of zero-sum games \cite{Cosso2013}.

\begin{remark}
	Since the switching costs are non-negative we get the following type of {\it Mokobodski's condition}: there exists a system of processes $\boldsymbol{w} = \{w^{i,j}\}_{(i,j) \in \Gamma}$ belonging to $\mathcal{W}^{2,m}_{c}$ such that for all $(i,j) \in \Gamma$: for all $0 \le t \le T$ a.s.,
	\begin{equation}\label{eq:Mokobodski-Condition}
	\max_{i_{1} \neq i, i_{1} \in \Gamma^{1}}\{w^{i_{1},j}_{t} - \hat{g}^{i,i_{1}}_{t}\} \le w^{i,j}_{t} \le \min_{j_{1} \neq j, j_{1} \in \Gamma^{2}}\{w^{i,j_{1}}_{t} + \check{g}^{j,j_{1}}_{t}\}.
	\end{equation}
	Indeed, by taking $\boldsymbol{w}$ to be the $m$-dimensional null process, $\boldsymbol{w} \equiv \boldsymbol{0}$, it is easily verified that $\boldsymbol{w} \in \mathcal{W}^{2,m}_{c}$ and \eqref{eq:Mokobodski-Condition} holds. Mokobodski's condition \eqref{eq:Mokobodski-Condition} is an extension of that typically assumed for single-agent switching problems in a variety of settings \cite{Bismut1981,Bouchard2009,Elie2014,Martyr2014b}, or for two-player Dynkin games or DRBSDEs \cite{Bismut1977,Hamadene2006,Martyr2014c,Pham2013,Dumitrescu2014}, both of which are special, somewhat degenerate, cases of the optimal switching game studied here.
	
	Let us point out that for any solution $(\boldsymbol{Y},\boldsymbol{Z},\boldsymbol{K})$ to the DRBSDE \eqref{eq:DRBSDE-Bilteral-Interconnected-Obstacles}, $\boldsymbol{Y}$ satisfies Mokobodski's condition \eqref{eq:Mokobodski-Condition} and, a posteriori, also belongs to $\mathcal{W}^{2,m}_{c}$. Condition \eqref{eq:Mokobodski-Condition} can therefore be seen as a feasibility check for the inequality constraint \eqref{eq:DRBSDE-Bilteral-Interconnected-Obstacles}--$(ii)$: there exists at least one system of processes $\boldsymbol{Y}$ which satisfies \eqref{eq:DRBSDE-Bilteral-Interconnected-Obstacles}--$(ii)$ within a suitable class of candidates. Actually, we know from the results in \cite{Pham2013} that well-posedness of \eqref{eq:DRBSDE-Bilteral-Interconnected-Obstacles} is intricately linked to Mokobodski's condition \eqref{eq:Mokobodski-Condition}.
\end{remark}

\subsection{Proof of Theorem~\ref{Theorem:Verification}}
The existence of a solution to the DRBSDE \eqref{eq:DRBSDE-Bilteral-Interconnected-Obstacles} is closely related to the existence of both a value and a Nash equilibrium in the following Dynkin game (see for example \cite{Hamadene2006, Dumitrescu2014, Hamadene2004}, and also \cite{Stettner1982b} for the relation to impulse control games with delay).

\begin{proposition}\label{Problem:Implicit-Dynkin-Game-Upper-Value}
	Suppose there exists a solution $(\boldsymbol{Y},\boldsymbol{Z},\boldsymbol{K})$ to the DRBSDE \eqref{eq:DRBSDE-Bilteral-Interconnected-Obstacles}. Then for all $(s,i,j) \in [0,T] \times \Gamma$ a.s.:
	\vskip1em
	$(a)$ \enskip
	\begin{equation}\label{eq:Implicit-Dynkin-Game-Value}
	Y^{i,j}_{s} = \essinf_{\tau \in \mathcal{T}_{s}}\esssup_{\sigma \in \mathcal{T}_{s}}\mathcal{J}^{i,j}_{s}(\sigma,\tau) = \esssup_{\sigma \in \mathcal{T}_{s}}\essinf_{\tau \in \mathcal{T}_{s}}\mathcal{J}^{i,j}_{s}(\sigma,\tau),
	\end{equation}
	where,
	\begin{equation}
	\begin{split}
	\mathcal{J}^{i,j}_{s}(\sigma,\tau) \coloneqq {} & \mathbb{E}\biggl[\int_{s}^{\sigma \wedge \tau}f^{i,j}_{t}{d}t + \mathbf{1}_{\{\tau < \sigma\}}{U^{i,j}_{\tau}}(\boldsymbol{Y}) + \mathbf{1}_{\{\sigma \le \tau,\; \sigma < T\}}{L^{i,j}_{\sigma}}(\boldsymbol{Y}) \Bigm \vert \mathcal{F}_{s} \biggr] \\
	& + \mathbb{E}\bigl[h^{i,j}\mathbf{1}_{\{\sigma = \tau = T\}} \bigm \vert \mathcal{F}_{s} \bigr],
	\end{split}
	\end{equation}
	and $\boldsymbol{h}$, $\boldsymbol{f}$, $\boldsymbol{L}$ and $\boldsymbol{U}$ are the data for \eqref{eq:DRBSDE-Bilteral-Interconnected-Obstacles} (see Definition \ref{Definition:DRBSDE}).
	\vskip1em
	$(b)$ we have $Y^{i,j}_{s} = \mathcal{J}^{i,j}_{s}(\sigma^{i,j}_{s},\tau^{i,j}_{s})$ where $\sigma^{i,j}_{s} \in \mathcal{T}_{s}$ and $\tau^{i,j}_{s} \in \mathcal{T}_{s}$ are stopping times defined by,
	\begin{equation}\label{eq:Implicit-Obstacle-Time-Selector}
	\begin{cases}
	\sigma^{i,j}_{s}  = \inf\{s \le t \le T \colon Y^{i,j}_{t} = L^{i,j}_{t}(\boldsymbol{Y})\} \wedge T, \\
	\tau^{i,j}_{s} = \inf\{s \le t \le T \colon Y^{i,j}_{t} = U^{i,j}_{t}(\boldsymbol{Y})\} \wedge T,
	\end{cases}
	\end{equation}
	and we use the convention that $\inf\emptyset = +\infty$. Moreover, $\bigl(\sigma^{i,j}_{s},\tau^{i,j}_{s}\bigr)$ is a Nash equilibrium for the Dynkin game,
	\begin{equation}
	\mathcal{J}^{i,j}_{s}(\sigma,\tau^{i,j}_{s}) \le \mathcal{J}^{i,j}_{s}(\sigma^{i,j}_{s},\tau^{i,j}_{s}) \le \mathcal{J}^{i,j}_{s}(\sigma^{i,j}_{s},\tau) \quad \forall \sigma \in \mathcal{T}_{s} \text{ and } \tau \in \mathcal{T}_{s}.
	\end{equation}
\end{proposition}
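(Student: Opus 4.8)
The plan is to exploit the fact that, once a solution $(\boldsymbol{Y},\boldsymbol{Z},\boldsymbol{K})$ is fixed, the interconnection disappears: for each fixed $(i,j)$ the barriers $L^{i,j}(\boldsymbol{Y})$ and $U^{i,j}(\boldsymbol{Y})$ are \emph{exogenous} processes in $\mathcal{S}^2_c$ (continuity follows from $\boldsymbol{Y} \in \mathcal{S}^{2,m}_c$ and $\hat{g},\check{g} \in \mathcal{S}^2_c$, being finite maxima and minima of continuous processes), satisfying $L^{i,j}(\boldsymbol{Y}) \le Y^{i,j} \le U^{i,j}(\boldsymbol{Y})$. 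Thus each $Y^{i,j}$ is the solution of a single \emph{standard} doubly reflected BSDE with continuous barriers, and rather than invoke the general DRBSDE/Dynkin-game correspondence I would give a self-contained verification argument. I would prove part $(b)$ first --- that the hitting times $\sigma^{i,j}_s,\tau^{i,j}_s$ form a saddle point --- and then deduce $(a)$ as a formal consequence.

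Fix $(s,i,j)$ and suppress superscripts. For arbitrary $\sigma,\tau \in \mathcal{T}_s$ set $\theta = \sigma \wedge \tau$. Evaluating the dynamics \eqref{eq:DRBSDE-Bilteral-Interconnected-Obstacles}--$(i)$ between $s$ and $\theta$ and taking $\mathbb{E}[\,\cdot\mid\mathcal{F}_s]$, the stochastic integral vanishes because $\int_0^\cdot Z\,dB \in \mathcal{S}^2_c$ is a square-integrable (hence uniformly integrable) martingale, yielding
\[
Y_s = \mathbb{E}\Bigl[\, Y_\theta + \int_s^\theta f_t\,dt + (K_\theta - K_s) \Bigm| \mathcal{F}_s \,\Bigr].
\]

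The central observation is the sign of $K_\theta - K_s$ on the relevant intervals, which comes from the minimality conditions \eqref{eq:DRBSDE-Bilteral-Interconnected-Obstacles}--$(iii)$. On $[s,\sigma^{i,j}_s)$ we have $Y > L(\boldsymbol{Y})$ by definition of $\sigma^{i,j}_s$, so $\int (L(\boldsymbol{Y}) - Y)\,dK^{+} = 0$ forces $dK^{+}=0$ there; by continuity of $K^{+}$ this gives $K^{+}_\theta = K^{+}_s$ whenever $\theta \le \sigma^{i,j}_s$, whence $K_\theta - K_s = -(K^{-}_\theta - K^{-}_s) \le 0$. Symmetrically, on $[s,\tau^{i,j}_s)$ we have $dK^{-}=0$, so $K_\theta - K_s \ge 0$ whenever $\theta \le \tau^{i,j}_s$. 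For the upper saddle inequality I take $\sigma = \sigma^{i,j}_s$ (and arbitrary $\tau$), so $K_\theta - K_s \le 0$ and hence $Y_s \le \mathbb{E}[Y_\theta + \int_s^\theta f\,dt \mid \mathcal{F}_s]$. A case split on $\{\tau < \sigma\}$, $\{\sigma \le \tau,\,\sigma < T\}$ and $\{\sigma = \tau = T\}$, using $Y_\tau \le U_\tau(\boldsymbol{Y})$, the barrier-hitting identity $Y_{\sigma^{i,j}_s} = L_{\sigma^{i,j}_s}(\boldsymbol{Y})$ (continuity of $\boldsymbol{Y}$ and of the barrier), and $Y_T = h$, shows $Y_\theta$ is dominated by the matching terminal term of $\mathcal{J}(\sigma^{i,j}_s,\tau)$, giving $Y_s \le \mathcal{J}(\sigma^{i,j}_s,\tau)$. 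The lower inequality $\mathcal{J}(\sigma,\tau^{i,j}_s) \le Y_s$ is obtained symmetrically. Taking $\sigma = \sigma^{i,j}_s$ and $\tau = \tau^{i,j}_s$ together, both $dK^{+}$ and $dK^{-}$ vanish on $[s,\sigma^{i,j}_s \wedge \tau^{i,j}_s)$ so $K_\theta - K_s = 0$ and every inequality becomes equality, yielding $Y_s = \mathcal{J}(\sigma^{i,j}_s,\tau^{i,j}_s)$ and the saddle property of $(b)$.

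Part $(a)$ then follows from the sandwich $Y_s \le \esssup_{\sigma}\essinf_{\tau}\mathcal{J} \le \essinf_{\tau}\esssup_{\sigma}\mathcal{J} \le Y_s$, where the two outer bounds use the saddle inequalities just established and the middle is the generic max-min inequality. The routine work is the three-way case analysis; the step requiring the most care is the sign analysis of $K_\theta - K_s$, since it is precisely where the minimality conditions \eqref{eq:DRBSDE-Bilteral-Interconnected-Obstacles}--$(iii)$ together with continuity of $\boldsymbol{Y}$ and the barriers are used, and it is this that converts the reflection into the correct obstacle in each branch of $\mathcal{J}$. A secondary point is to check that $\sigma^{i,j}_s,\tau^{i,j}_s \in \mathcal{T}_s$, which holds because hitting times of closed sets by continuous adapted processes are stopping times.
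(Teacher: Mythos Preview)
Your proposal is correct. The paper does not actually prove this proposition: it simply observes that the ordering $L^{i,j}(\boldsymbol{Y}) \le Y^{i,j} \le U^{i,j}(\boldsymbol{Y})$ places each component in the setting of a standard doubly reflected BSDE with fixed continuous barriers, and then cites Proposition~2.2.1 of Hamad\`ene--Hassani (2004) for the Dynkin-game representation and saddle point. What you have written is precisely the direct verification argument underlying that cited result --- integrate the dynamics to $\sigma\wedge\tau$, use the Skorokhod conditions to control the sign of $K_{\sigma\wedge\tau}-K_s$ when one of the two stopping times is the barrier hitting time, and read off the saddle inequalities by the three-case split. So the approaches coincide; yours is self-contained where the paper defers to the literature.
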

\begin{proof}
	Recalling the ordering \eqref{eq:DRBSDE-Bilteral-Interconnected-Obstacles}-$(ii)$, the result follows from Proposition 2.2.1 of \cite{Hamadene2004}, for example.
\end{proof}

We will use Proposition \ref{Problem:Implicit-Dynkin-Game-Upper-Value} and a dynamic programming argument to first establish claim {\it (i)} of Theorem~\ref{Theorem:Verification}, then obtain {\it (ii)} and {\it (iii)} as corollaries. Since \eqref{eq:Verification-Ansatz} trivially holds when $s = T$, let $s \in [0,T)$ and $(i,j) \in \Gamma$ be arbitrary. Define a sequence $(\rho_{n},\gamma_{n})_{n \ge 0}$ as follows,
\begin{gather}\label{eq:Joint-Optimal-Switching-Control}
\rho_{0} = s, \quad \gamma_{0} = (i,j) \enskip \text{and for} \enskip n \ge 1,\\
\rho_{n} = \sigma^{\gamma_{n-1}}_{\rho_{n-1}} \wedge \tau^{\gamma_{n-1}}_{\rho_{n-1}}, \quad \gamma_{n} = \begin{cases}
\bigl(\mathcal{L}_{\rho_{n}}^{\gamma_{n-1}}(\boldsymbol{Y}),\gamma_{n-1}^{(2)}\bigr),& \text{on} \quad \mathcal{M}^{+}_{n} \\
\bigl(\gamma_{n-1}^{(1)}, \mathcal{U}^{\gamma_{n-1}}_{\rho_{n}}(\boldsymbol{Y})\bigr),& \text{on} \quad \mathcal{M}^{-}_{n} \\
\gamma_{n-1},& \text{otherwise}
\end{cases}
\end{gather}
where $\sigma^{\gamma_{n-1}}_{\rho_{n-1}}$ and $\tau^{\gamma_{n-1}}_{\rho_{n-1}}$ are defined using \eqref{eq:Implicit-Obstacle-Time-Selector} above, $\mathcal{L}^{\gamma_{n-1}}_{\rho_{n}}$ and $\mathcal{U}^{\gamma_{n-1}}_{\rho_{n}}$ are obtained from the switching selectors,
\begin{equation}\label{eq:Implicit-Obstacle-Mode-Selector}
\begin{cases}
\mathcal{L}^{i,j}_{t}(\boldsymbol{Y}) \in \argmax\limits_{i_{1} \neq i, i_{1} \in \Gamma^{1}}\{Y^{i_{1},j}_{t} - \hat{g}^{i,i_{1}}_{t}\}, \\
\mathcal{U}^{i,j}_{t}(\boldsymbol{Y}) \in \argmin\limits_{j_{1} \neq j, j_{1} \in \Gamma^{2}}\{Y^{i,j_{1}}_{t} + \check{g}^{j,j_{1}}_{t}\},
\end{cases}
\end{equation}
and for $n \ge 1$, $\mathcal{M}^{+}_{n}$ and $\mathcal{M}^{-}_{n}$ are the events,
\[
\begin{cases}
\mathcal{M}^{+}_{n} = \bigl\{\sigma^{\gamma_{n-1}}_{\rho_{n-1}} \le \tau^{\gamma_{n-1}}_{\rho_{n-1}},\;\sigma^{\gamma_{n-1}}_{\rho_{n-1}} < T\bigr\},\\
\mathcal{M}^{-}_{n} = \bigl\{\tau^{\gamma_{n-1}}_{\rho_{n-1}} < \sigma^{\gamma_{n-1}}_{\rho_{n-1}}\bigr\}.
\end{cases}
\]

\begin{lemma}\label{Lemma:Joint-Optimal-Control} Under the conditions of Theorem \ref{Theorem:Verification} we have $\gamma(\alpha^{*},\beta^{*}) \in \mathcal{G}^{i,j}_{s}$ and $Y^{i,j}_{s} = J^{i,j}_{s}(\gamma(\alpha^{*},\beta^{*}))$ a.s., where $\alpha^{*} = (\sigma^{*}_{n},\xi^{*}_{n})_{n \ge 0}$ and $\beta^{*} = (\tau^{*}_{n},\zeta^{*}_{n})_{n \ge 0}$ are sequences defined from $(\rho_{n},\gamma_{n})_{n \ge 0}$ as follows,
\begin{gather}\label{eq:Decomposition-Of-Joint-Control}
\sigma^{*}_{0} = \tau^{*}_{0} = s, \quad (\xi^{*}_{0},\zeta^{*}_{0}) = (i,j) \enskip \text{and for} \enskip n \ge 1,\\
\begin{cases}
\sigma^{*}_{n} = \inf\{t \ge  \sigma^{*}_{n-1} \colon u^{(1)}_{t} \neq \xi^{*}_{n-1} \} \wedge T, \quad \xi^{*}_{n} = u^{(1)}_{\sigma^{*}_{n}+}, \\
\tau^{*}_{n} = \inf\{t \ge  \tau^{*}_{n-1} \colon u^{(2)}_{t} \neq \zeta^{*}_{n-1} \} \wedge T, \quad \zeta^{*}_{n} = u^{(2)}_{\tau^{*}_{n}+},
\end{cases}
\end{gather}
where $u$ is defined using \eqref{eq:Optimal-Switching-Indicator-Continuous-Time}.
\end{lemma}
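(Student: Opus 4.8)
The plan is to view the sequence $(\rho_{n},\gamma_{n})_{n\ge 0}$ as the outcome of playing, stage by stage, the Nash equilibrium of the Dynkin game furnished by Proposition~\ref{Problem:Implicit-Dynkin-Game-Upper-Value}, and then to collapse the resulting one-step relations by telescoping. First I would fix $n\ge 1$ and note that, since $\bigl(\sigma^{\gamma_{n-1}}_{\rho_{n-1}},\tau^{\gamma_{n-1}}_{\rho_{n-1}}\bigr)$ is the equilibrium pair for the game started at $\rho_{n-1}$ in mode $\gamma_{n-1}$ and $\rho_{n}=\sigma^{\gamma_{n-1}}_{\rho_{n-1}}\wedge\tau^{\gamma_{n-1}}_{\rho_{n-1}}$, Proposition~\ref{Problem:Implicit-Dynkin-Game-Upper-Value}$(b)$ gives $Y^{\gamma_{n-1}}_{\rho_{n-1}}=\mathcal{J}^{\gamma_{n-1}}_{\rho_{n-1}}\bigl(\sigma^{\gamma_{n-1}}_{\rho_{n-1}},\tau^{\gamma_{n-1}}_{\rho_{n-1}}\bigr)$. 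The selectors \eqref{eq:Implicit-Obstacle-Mode-Selector} are designed so that $L^{\gamma_{n-1}}_{\rho_{n}}(\boldsymbol{Y})=Y^{\gamma_{n}}_{\rho_{n}}-\hat{g}^{\gamma_{n-1}^{(1)},\gamma_{n}^{(1)}}_{\rho_{n}}$ on $\mathcal{M}^{+}_{n}$ and $U^{\gamma_{n-1}}_{\rho_{n}}(\boldsymbol{Y})=Y^{\gamma_{n}}_{\rho_{n}}+\check{g}^{\gamma_{n-1}^{(2)},\gamma_{n}^{(2)}}_{\rho_{n}}$ on $\mathcal{M}^{-}_{n}$, while on the complementary event $\{\sigma=\tau=T\}$ we have $\rho_{n}=T$ and $\gamma_{n}=\gamma_{n-1}$. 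Since $\hat{g}^{i,i}=\check{g}^{j,j}=0$, the terminal reward in $\mathcal{J}^{\gamma_{n-1}}_{\rho_{n-1}}$ combines into the single relation $Y^{\gamma_{n-1}}_{\rho_{n}}=Y^{\gamma_{n}}_{\rho_{n}}-\bigl(\hat{g}^{\gamma_{n-1}^{(1)},\gamma_{n}^{(1)}}_{\rho_{n}}-\check{g}^{\gamma_{n-1}^{(2)},\gamma_{n}^{(2)}}_{\rho_{n}}\bigr)$ valid on all of $\Omega$.

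I would then upgrade this to a pathwise recursion, which is what controls the cost. On $[\rho_{n-1},\rho_{n})$ the value $Y^{\gamma_{n-1}}$ lies strictly between its barriers, so by the Skorokhod conditions \eqref{eq:DRBSDE-Bilteral-Interconnected-Obstacles}$(iii)$ neither $K^{\gamma_{n-1},+}$ nor $K^{\gamma_{n-1},-}$ charges this interval; as $K^{\gamma_{n-1}}$ is continuous it is constant on $[\rho_{n-1},\rho_{n}]$. Feeding this into \eqref{eq:DRBSDE-Bilteral-Interconnected-Obstacles}$(i)$, using $u_{t}\equiv\gamma_{n-1}$ on $(\rho_{n-1},\rho_{n}]$ and the combined relation, yields $Y^{\gamma_{n-1}}_{\rho_{n-1}}=Y^{\gamma_{n}}_{\rho_{n}}+\int_{\rho_{n-1}}^{\rho_{n}}f^{u_{t}}_{t}\,{d}t-\bigl(\hat{g}^{\gamma_{n-1}^{(1)},\gamma_{n}^{(1)}}_{\rho_{n}}-\check{g}^{\gamma_{n-1}^{(2)},\gamma_{n}^{(2)}}_{\rho_{n}}\bigr)-\int_{\rho_{n-1}}^{\rho_{n}}Z^{\gamma_{n-1}}_{t}\,{d}B_{t}$. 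Summing over $n=1,\ldots,N$ telescopes the $Y$-terms and gives $Y^{i,j}_{s}=Y^{\gamma_{N}}_{\rho_{N}}+\int_{s}^{\rho_{N}}f^{u_{t}}_{t}\,{d}t-C^{\gamma(\alpha^{*},\beta^{*})}_{N}-\int_{s}^{\rho_{N}}Z^{u_{t}}_{t}\,{d}B_{t}$. Admissibility then drops out: $\sup_{N}|Y^{\gamma_{N}}_{\rho_{N}}|\le\sup_{t}\max_{(i,j)\in\Gamma}|Y^{i,j}_{t}|\in L^{2}$ as $\boldsymbol{Y}\in\mathcal{S}^{2,m}_{c}$, $\int_{s}^{T}|f^{u_{t}}_{t}|\,{d}t\in L^{2}$ by Cauchy--Schwarz and $\boldsymbol{f}\in\mathcal{H}^{2,m}$, and $\sup_{N}\bigl|\int_{s}^{\rho_{N}}Z^{u_{t}}_{t}\,{d}B_{t}\bigr|\in L^{2}$ by Doob's inequality and $\boldsymbol{Z}\in\mathcal{H}^{2,m}$; hence $\sup_{N}\bigl|C^{\gamma(\alpha^{*},\beta^{*})}_{N}\bigr|\in L^{2}$, which is precisely $\gamma(\alpha^{*},\beta^{*})\in\mathcal{G}^{i,j}_{s}$.

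The remaining work is to check that the decomposition \eqref{eq:Decomposition-Of-Joint-Control} produces genuine controls $\alpha^{*}\in\mathsf{A}_{s}^{i}$, $\beta^{*}\in\mathsf{B}_{s}^{j}$ whose coupling reproduces $(\rho_{n},\gamma_{n})$. That the coupling returns $(\rho_{n},\gamma_{n})$ (so that $C^{\gamma(\alpha^{*},\beta^{*})}_{N}$ is indeed the telescoped cost above) is a bookkeeping check: the rule $\sigma\le\tau$ defining $\mathcal{M}^{+}_{n}$ matches the tie-breaking convention of Definition~\ref{Definition:Coupled-Control} that player~1 moves first. The genuine obstacle, and the step I expect to be hardest, is to show that a.s. only finitely many switches occur, i.e. $\rho_{N}=T$ for all large $N$, as required by Definition~\ref{Definition:Switching-Controls}. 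I would argue by contradiction: on an event of positive probability the times accumulate at some $\rho_{\infty}\in[s,T]$, whence finiteness of $\Gamma$ forces a joint mode to recur and produce a loop; summing the barrier identities around this loop forces $\sum_{q}\varphi^{q,q+1}$ to vanish in the limit as the switch times tend to $\rho_{\infty}$ (using continuity of $\boldsymbol{Y}$, $\hat{g}$, $\check{g}$), contradicting the non-free loop property \eqref{eq:SwitchingCostNoArbitrage-2}. The degenerate sub-case of two successive instantaneous switches by the same player is excluded more directly by the strict triangle inequality \eqref{eq:SwitchingCostNoArbitrage-1}.

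Once finiteness is established, for each $\omega$ the telescoped identity stabilises at the random index beyond which $\rho_{N}=T$, giving $Y^{i,j}_{s}=h^{u_{T}}+\int_{s}^{T}f^{u_{t}}_{t}\,{d}t-\lim_{N}C^{\gamma(\alpha^{*},\beta^{*})}_{N}-\int_{s}^{T}Z^{u_{t}}_{t}\,{d}B_{t}$, where $Y^{\gamma_{N}}_{\rho_{N}}\to h^{u_{T}}$ by continuity of $\boldsymbol{Y}$ and the terminal condition $Y^{i,j}_{T}=h^{i,j}$. Taking $\mathbb{E}[\,\cdot\mid\mathcal{F}_{s}]$ annihilates the stochastic integral and, comparing with \eqref{eq:Zero-Sum-Performance-Functional}, identifies $Y^{i,j}_{s}=J^{i,j}_{s}(\gamma(\alpha^{*},\beta^{*}))$ a.s., which completes the proof.
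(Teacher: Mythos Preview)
Your proposal is correct and follows essentially the same route as the paper: use the Skorokhod conditions \eqref{eq:DRBSDE-Bilteral-Interconnected-Obstacles}$(iii)$ together with the definition of $\rho_{n}$ to kill the $K$-increments on $[\rho_{n-1},\rho_{n}]$, obtain the pathwise one-step identity from \eqref{eq:DRBSDE-Bilteral-Interconnected-Obstacles}$(i)$, telescope, bound $\sup_{N}|C^{\gamma(\alpha^{*},\beta^{*})}_{N}|$ in $L^{2}$, and pass to the limit under conditional expectation. Two minor remarks: your opening paragraph invoking Proposition~\ref{Problem:Implicit-Dynkin-Game-Upper-Value}$(b)$ is redundant, since your second paragraph derives the needed pathwise relation directly from the BSDE and the barrier selectors (which is exactly what the paper does); and for the finiteness-of-switches step the paper simply cites \cite{Hamadene2012}, whereas you sketch the loop-extraction/continuity argument explicitly---your sketch is the right idea and matches that reference.
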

\begin{proof}
We begin by establishing that $\alpha^{*} \in \mathsf{A}^{i}_{s}$. The non-free loop property \eqref{eq:SwitchingCostNoArbitrage-2} prevents accumulation of the switching times $\rho^{*}_{n}$, in the sense that $\mathbb{P}(\{\rho^{*}_{n} < T\; \forall n \ge 0\}) = 0$ (see, for example, \cite[pp.~192--193]{Hamadene2012}). Since $\sigma^{*}_{n} \ge \rho_{n}$ for $n \ge 0$, it follows that $\mathbb{P}(\{\sigma^{*}_{n} < T\; \forall n \ge 0\}) = 0$. Also, the consistency property \eqref{eq:SwitchingCostNoArbitrage-1} ensures that it is not optimal for a single player to switch twice at the same instant, so we have $\sigma^{*}_{n} < \sigma^{*}_{n+1}$ on $\{\sigma^{*}_{n} < T\}$ for $n \ge 1$ (see \cite{Martyr2014b} or \cite{Hamadene2012}). By the construction of $\alpha^{*}$, noting that $u^{(1)}_{\sigma^{*}_{n}+}$ is $\mathcal{F}_{\sigma^{*}_{n}}$-measurable since $\mathbb{F}$ is right-continuous, the remaining parts of Definition \ref{Definition:Switching-Controls} are satisfied, and $\alpha^{*} \in \mathsf{A}^{i}_{s}$. Similarly $\beta^{*} \in \mathsf{B}^{j}_{s}$.

We now prove that $\gamma(\alpha^{*},\beta^{*}) \in \mathcal{G}^{i,j}_{s}$ by proceeding in a similar manner to \cite{Hamadene2012}. Using \eqref{eq:DRBSDE-Bilteral-Interconnected-Obstacles}-(i) and \eqref{eq:DRBSDE-Bilteral-Interconnected-Obstacles}-(iii) together with the construction of $\rho_1$ gives $\mathbb{P}$-a.s.,
\begin{align*}
Y^{i,j}_{s} & = \int_{s}^{\rho_{1}}f^{i,j}_{t}{d}t + h^{i,j}\mathbf{1}_{\{\rho_{1} = T\}} + Y^{i,j}_{\rho_{1}}\mathbf{1}_{\{\rho_{1} < T\}} + \int_{s}^{\rho_{1}}{d}K^{i,j,+}_{t} - \int_{s}^{\rho_{1}}{d}K^{i,j,-}_{t} - \int_{s}^{\rho_{1}}Z^{i,j}_{t}{d}B_{t}, \nonumber \\
& = \int_{s}^{\rho_{1}}f^{i,j}_{t}{d}t + h^{i,j}\mathbf{1}_{\{\rho_{1} = T\}} + Y^{i,j}_{\rho_{1}}\mathbf{1}_{\{\rho_{1} < T\}} - \int_{s}^{\rho_{1}}Z^{i,j}_{t}{d}B_{t}.
\end{align*}
By considering the first switch for either player we have
\begin{align*}
Y^{i,j}_{s} = {} & \int_{s}^{\rho_{1}}f^{i,j}_{t}{d}t + \Bigl(Y^{\gamma^{(1)}_{1},j}_{\sigma^{i,j}_{s}} - \hat{g}^{i,\gamma^{(1)}_{1}}_{\sigma^{i,j}_{s}}\Bigr)\mathbf{1}_{\{\sigma^{i,j}_{s} < T\}}\mathbf{1}_{\{\sigma^{i,j}_{s} \le \tau^{i,j}_{s}\}} + \Bigl(Y^{i,\gamma^{(2)}_{1}}_{\tau^{i,j}_{s}} - \check{g}^{j,\gamma^{(2)}_{1}}_{\tau^{i,j}_{s}}\Bigr)\mathbf{1}_{\{\tau^{i,j}_{s} <\, \sigma^{i,j}_{s}\}} \nonumber \\
& + h^{i,j}\mathbf{1}_{\{\rho_{1} = T\}} - \int_{s}^{\rho_{1}}Z^{i,j}_{t}{d}B_{t} \nonumber \\
= {} & \int_{s}^{\rho_{1}}f^{u_{t}}_{t}{d}t + Y^{\gamma_{1}}_{\rho_{1}}\mathbf{1}_{\{\rho_{1} < T\}} + h^{\gamma_{0}}\mathbf{1}_{\{\rho_{1} = T\}} - \Bigl[\hat{g}^{\gamma^{(1)}_{0},\gamma^{(1)}_{1}}_{\rho_{1}} - \check{g}^{\gamma^{(2)}_{0},\gamma^{(2)}_{1}}_{\rho_{1}}\Bigr] - \int_{s}^{\rho_{1}}Z^{u_{t}}_{t}{d}B_{t},
\label{eq:Verification-First-Step}
\end{align*}
(to account for the event $\{\rho_{1} = T\}$, recall that $\hat{g}^{i,i}_{t} = \check{g}^{j,j}_{t} = 0$).
Proceeding iteratively for $n = 1,\ldots,N$ we obtain by substitution
\begin{equation}\label{eq:Integrate-BSDE-Optimal-Control-1}
\begin{split}
Y^{i,j}_{s} = {} & \int_{s}^{\rho_{N}}f^{u_{t}}_{t}{d}t + \sum_{n = 1}^{N}h^{\gamma_{n-1}}\mathbf{1}_{\{\rho_{n} = T,\; \rho_{n-1} < T\}} - \sum_{n = 1}^{N}\Bigl[\hat{g}^{\gamma^{(1)}_{n-1},\gamma^{(1)}_{n}}_{\rho_{n}} - \check{g}^{\gamma^{(2)}_{n-1},\gamma^{(2)}_{n}}_{\rho_{n}} \Bigr] \\
& + Y^{\gamma_{N}}_{\rho_{N}}\mathbf{1}_{\{\rho_{N} < T\}} - \int_{s}^{\rho_{N}}Z^{u_{t}}_{t}{d}B_{t},
\end{split}
\end{equation}
from which we obtain	
\begin{align}\label{eq:Integrate-BSDE-Optimal-Control-2}
C^{\gamma(\alpha^{*},\beta^{*})}_{N} = {} & Y^{\gamma_{N}}_{\rho_{N}}\mathbf{1}_{\{\rho_{N} < T\}} - Y^{i,j}_{s} + \int_{s}^{\rho_{N}}f^{u_{t}}_{t}{d}t + \sum_{n = 1}^{N}h^{\gamma_{n-1}}\mathbf{1}_{\{\rho_{n} = T,\; \rho_{n-1} < T\}} \nonumber \\
& - \int_{s}^{\rho_{N}}Z^{u_{t}}_{t}{d}B_{t}.
\end{align}
Let $M^{u} = (M^{u}_{t})_{s \le t \le T}$ denote the stochastic integral $M^{u}_{t} = \int_{s}^{t}Z^{u_{r}}_{r}{d}B_{r}$, which is a well-defined square-integrable martingale on $[s,T]$ \cite{Chung2014}. Continuing from \eqref{eq:Integrate-BSDE-Optimal-Control-2} we have a.s.,
\begin{align}\label{eq:Integrate-BSDE-Optimal-Control-3}
\sup_{N \ge 1}\bigl|C^{\gamma(\alpha^{*},\beta^{*})}_{N}\bigr| \le {} & \int_{s}^{T}|f^{u_{t}}_{t}|{d}t + \max_{(i,j) \in \Gamma}|h^{i,j}| + |Y^{i,j}_{s}| + \max_{(i,j) \in \Gamma}\sup_{s \le t \le T}|Y^{i,j}_{s}| \nonumber \\
& + \sup_{s \le t \le T}|M^{u}_{t}|.
\end{align}
The right-hand side of \eqref{eq:Integrate-BSDE-Optimal-Control-3} is a square-integrable random variable, thereby proving $\gamma(\alpha^{*},\beta^{*}) \in \mathcal{G}^{i,j}_{s}$.

It is now straightforward to prove $Y^{i,j}_{s} = J^{i,j}_{s}(\gamma(\alpha^{*},\beta^{*}))$ a.s. by taking conditional expectations in \eqref{eq:Integrate-BSDE-Optimal-Control-1} then passing to the limit $N \to \infty$, which is justified since $\gamma(\alpha^{*},\beta^{*}) \in \mathcal{G}^{i,j}_{s}$,
\begin{align}\label{eq:Verification-Limiting-Step}
Y^{i,j}_{s} & = \mathbb{E}\left[\int_{s}^{T}f^{u_{t}}_{t}{d}t + h^{u_{T}} - \sum_{n = 1}^{\infty}\Bigl[\hat{g}^{\gamma^{(1)}_{n-1},\gamma^{(1)}_{n}}_{\rho_{n}} - \check{g}^{\gamma^{(2)}_{n-1},\gamma^{(2)}_{n}}_{\rho_{n}} \Bigr] \Bigm \vert \mathcal{F}_{s}\right] \nonumber \\
& = J^{i,j}_{s}(\gamma(\alpha^{*},\beta^{*})).
\end{align}
\end{proof}
For a given $\alpha = (\sigma_{n},\xi_{n})_{n \ge 0} \in \mathsf{A}_{s}^{i}$, let $\overline{\beta^{*}}(\alpha) = (\tau_{n},\zeta_{n})_{n \ge 0}$ be the control for player 2 defined similarly to \eqref{eq:Decomposition-Of-Joint-Control} with the sequence $(\rho_{n},\gamma_{n})_{n \ge 0}$ constructed by,
\begin{gather}\label{eq:One-Sided-Optimal-Switching-Control-Player-2}
\rho_{0} = s, \quad \gamma_{0} = (i,j) \enskip \text{and for} \enskip n \ge 1,\\
\rho_{n} = \sigma_{\check{r}_{n}} \wedge \check{\tau}_{n}, \quad \gamma_{n} = \begin{cases}
\bigl(\xi_{\check{r}_{n}},\gamma_{n-1}^{(2)}\bigr),& \text{on} \quad \check{\mathcal{M}}^{+}_{n} \\
\bigl(\gamma_{n-1}^{(1)}, \mathcal{U}^{\gamma_{n-1}}_{\rho_{n}}(\boldsymbol{Y})\bigr),& \text{on} \quad \check{\mathcal{M}}^{-}_{n} \\
\gamma_{n-1},& \text{otherwise}
\end{cases}
\end{gather}
where $\mathcal{U}^{\gamma_{n-1}}_{\rho_{n}}$ is obtained from \eqref{eq:Implicit-Obstacle-Mode-Selector}, $\check{\tau}_{n} \coloneqq \tau^{\gamma_{n-1}}_{\rho_{n-1}}$ for $n \ge 1$, $\{\check{r}_{n}\}_{n \ge 0}$ is defined iteratively by $\check{r}_{0} = 0$, $\check{r}_{1} = 1$ and for $n \ge 2$,
\[
\check{r}_{n} = \check{r}_{n-1} + \mathbf{1}_{\{\sigma_{\check{r}_{n-1}} \le\, \check{\tau}_{n - 1}\}},
\]
and for $n \ge 1$, $\check{\mathcal{M}}^{+}_{n}$ and $\check{\mathcal{M}}^{-}_{n}$ are the events,
\[
\begin{cases}
\check{\mathcal{M}}^{+}_{n} = \{\sigma_{\check{r}_{n}} \le \check{\tau}_{n},\;\sigma_{\check{r}_{n}} < T \},\\
\check{\mathcal{M}}^{-}_{n} = \{\check{\tau}_{n} < \sigma_{\check{r}_{n}}\}.
\end{cases}
\]
In an analogous manner using the lower switching selector $\boldsymbol{\mathcal{L}}(\boldsymbol{Y})$ in \eqref{eq:Implicit-Obstacle-Mode-Selector}, for each $\beta \in \mathsf{B}_{s}^{j}$ we define $\overline{\alpha^{*}}(\beta) \in \mathsf{A}_{s}^{j}$ for player 1. The following lemma points out key properties of $\overline{\alpha^{*}}$ and $\overline{\beta^{*}}$ utilised below to finish the proof of Theorem~\ref{Theorem:Verification}.

\begin{lemma}\label{Lemma:One-Sided-Optimal-Non-anticipative-Strategy-Player-2}
	\mbox{}
	\begin{enumerate}[(i)]
		\item We have $\overline{\alpha^{*}} \in \mathscr{A}^{i}_{s}$ and $\overline{\beta^{*}} \in \mathscr{B}^{j}_{s}$.
		\item We have
		\begin{equation}\label{eq:BSDE-Solution-Non-anticipative-Strategies}
		\esssup_{\alpha \in \mathcal{A}^{i}_{s}} J^{i,j}_{s}\bigl(\gamma(\alpha,\overline{\beta^{*}}(\alpha))\bigr) = Y^{i,j}_{s} = \essinf_{\beta \in \mathcal{B}^{j}_{s}} J^{i,j}_{s}\bigl(\gamma(\overline{\alpha^{*}}(\beta),\beta)\bigr).
		\end{equation}
	\end{enumerate}
\end{lemma}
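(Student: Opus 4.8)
The plan is to prove (i) first and then to deduce (ii) from a one-sided dynamic-programming argument built on the Dynkin game of Proposition~\ref{Problem:Implicit-Dynkin-Game-Upper-Value}; I spell out the strategy $\overline{\beta^{*}}$, the statements for $\overline{\alpha^{*}}$ following by interchanging the two players together with the barriers $\boldsymbol{L}$, $\boldsymbol{U}$ and the selectors \eqref{eq:Implicit-Obstacle-Mode-Selector}. For (i) I check that each $\overline{\beta^{*}}(\alpha)$ is a genuine control in $\mathsf{B}^{j}_{s}$ and that $\overline{\beta^{*}}$ satisfies the two defining properties of Definition~\ref{Definition:Switching-Strategies}. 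Validity and square-integrability are obtained exactly as at the start of the proof of Lemma~\ref{Lemma:Joint-Optimal-Control}: the non-free loop property \eqref{eq:SwitchingCostNoArbitrage-2} excludes accumulation of the coupled times $\rho_{n}$, hence of the extracted player-2 times; the consistency condition \eqref{eq:SwitchingCostNoArbitrage-1} forbids a double switch at a single instant; and the bound \eqref{eq:Integrate-BSDE-Optimal-Control-3}, applied to $\overline{\beta^{*}}(\alpha)$ with $\alpha\in\mathcal{A}^{i}_{s}$, dominates $\sup_{N}|C^{\overline{\beta^{*}}(\alpha)}_{N}|$ by a fixed $L^{2}$ random variable, giving $\overline{\beta^{*}}(\alpha)\in\mathcal{B}^{j}_{s}$. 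Non-anticipativity is a causality statement proved by induction on $n$: if $\alpha^{1}\equiv\alpha^{2}$ on $[s,\nu]$ then the coupled data $(\rho_{n},\gamma_{n})$ built in \eqref{eq:One-Sided-Optimal-Switching-Control-Player-2} agree on $\{\rho_{n-1}\le\nu\}$, because the only input-dependent ingredients are player~1's switch times and modes up to the present, whereas the hitting times $\tau^{\gamma_{n-1}}_{\rho_{n-1}}$ and the selectors \eqref{eq:Implicit-Obstacle-Mode-Selector} depend on $\boldsymbol{Y}$ alone; hence $u^{(2)}$ coincides on $[s,\nu]$ and $\overline{\beta^{*}}(\alpha^{1})\equiv\overline{\beta^{*}}(\alpha^{2})$ there.

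For (ii) I concentrate on the right-hand equality $\essinf_{\beta\in\mathcal{B}^{j}_{s}}J^{i,j}_{s}(\gamma(\overline{\alpha^{*}}(\beta),\beta))=Y^{i,j}_{s}$, the other being symmetric. The lower bound $J^{i,j}_{s}(\gamma(\overline{\alpha^{*}}(\beta),\beta))\ge Y^{i,j}_{s}$ for every $\beta$ is the crux and I prove it by iterating a one-step estimate. In joint mode $\gamma_{n-1}=(p,q)$ started at $\rho_{n-1}$, player~1 switches at its Dynkin-optimal time $\sigma^{p,q}_{\rho_{n-1}}$, so Proposition~\ref{Problem:Implicit-Dynkin-Game-Upper-Value} yields $Y^{p,q}_{\rho_{n-1}}\le\mathcal{J}^{p,q}_{\rho_{n-1}}(\sigma^{p,q}_{\rho_{n-1}},\tau)$ for the arbitrary switch time $\tau$ supplied by $\beta$; passing through the Dynkin value here is exactly what avoids any bookkeeping of $K^{p,q,-}$ when $\tau$ runs past player~1's upper hitting time. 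On $\{\tau<\sigma^{p,q}_{\rho_{n-1}}\}$ player~2 switches to some $\zeta$, and since $U^{p,q}_{\tau}(\boldsymbol{Y})=\min_{\ell\neq q}\{Y^{p,\ell}_{\tau}+\check{g}^{q,\ell}_{\tau}\}\le Y^{p,\zeta}_{\tau}+\check{g}^{q,\zeta}_{\tau}$, replacing the obstacle by the realised continuation only enlarges the right-hand side; on $\{\sigma^{p,q}_{\rho_{n-1}}\le\tau\}$ player~1 switches to the mode $\mathcal{L}^{p,q}_{\rho_{n}}(\boldsymbol{Y})$, so that term equals $Y^{\gamma^{(1)}_{n},q}_{\rho_{n}}-\hat{g}^{p,\gamma^{(1)}_{n}}_{\rho_{n}}$, an equality. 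Together these give $Y^{\gamma_{n-1}}_{\rho_{n-1}}\le\mathbb{E}\bigl[\int_{\rho_{n-1}}^{\rho_{n}}f^{u_{t}}_{t}\,dt-(\hat{g}^{\gamma^{(1)}_{n-1},\gamma^{(1)}_{n}}_{\rho_{n}}-\check{g}^{\gamma^{(2)}_{n-1},\gamma^{(2)}_{n}}_{\rho_{n}})+Y^{\gamma_{n}}_{\rho_{n}}\mathbf{1}_{\{\rho_{n}<T\}}+h^{\gamma_{n-1}}\mathbf{1}_{\{\rho_{n}=T\}}\bigm\vert\mathcal{F}_{\rho_{n-1}}\bigr]$. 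Summing over $n=1,\dots,N$ exactly as in \eqref{eq:Integrate-BSDE-Optimal-Control-1} telescopes the $Y$-terms, and letting $N\to\infty$ — legitimate because $\gamma(\overline{\alpha^{*}}(\beta),\beta)\in\mathcal{G}^{i,j}_{s}$ by (i) — produces $Y^{i,j}_{s}\le J^{i,j}_{s}(\gamma(\overline{\alpha^{*}}(\beta),\beta))$, whence $\essinf_{\beta}\ge Y^{i,j}_{s}$.

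It remains to show the bound is attained. Taking $\beta=\beta^{*}$, the player-2 control of Lemma~\ref{Lemma:Joint-Optimal-Control}, player~2 switches at its upper hitting times and to the selector $\mathcal{U}$, so the coupling $\gamma(\overline{\alpha^{*}}(\beta^{*}),\beta^{*})$ coincides with the Nash play $\gamma(\alpha^{*},\beta^{*})$ and the bound $U^{p,q}_{\tau}(\boldsymbol{Y})\le Y^{p,\zeta}_{\tau}+\check{g}^{q,\zeta}_{\tau}$ used above becomes an equality at every stage; each one-step estimate is then an equality, so $J^{i,j}_{s}(\gamma(\overline{\alpha^{*}}(\beta^{*}),\beta^{*}))=Y^{i,j}_{s}$ by Lemma~\ref{Lemma:Joint-Optimal-Control} and $\essinf_{\beta}\le Y^{i,j}_{s}$. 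This proves $\essinf_{\beta\in\mathcal{B}^{j}_{s}}J^{i,j}_{s}(\gamma(\overline{\alpha^{*}}(\beta),\beta))=Y^{i,j}_{s}$, and the mirror-image argument — player~2 playing $\overline{\beta^{*}}$, which caps the payoff above by $Y^{i,j}_{s}$ for every $\alpha$ with equality at $\alpha=\alpha^{*}$ — gives $\esssup_{\alpha\in\mathcal{A}^{i}_{s}}J^{i,j}_{s}(\gamma(\alpha,\overline{\beta^{*}}(\alpha)))=Y^{i,j}_{s}$.

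I expect the main obstacle to be the one-step inequality above: reconciling player~1's \emph{exact} optimal-stopping behaviour with player~2's \emph{completely arbitrary} switching while keeping the finite-variation terms under control. Routing the estimate through the Dynkin value $\mathcal{J}^{p,q}_{\rho_{n-1}}$ rather than through the raw identity \eqref{eq:DRBSDE-Bilteral-Interconnected-Obstacles}-(i) is what makes this tractable, since the value absorbs the $K^{p,q,-}$ contributions that would otherwise appear when the opponent delays past the upper hitting time. The only remaining delicate point is the uniform-integrability justification of the limit $N\to\infty$, which is furnished by the admissibility $\gamma(\overline{\alpha^{*}}(\beta),\beta)\in\mathcal{G}^{i,j}_{s}$ established in part (i).
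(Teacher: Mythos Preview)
Your overall architecture matches the paper's, and your one-step inequality for (ii) via the Dynkin saddle point of Proposition~\ref{Problem:Implicit-Dynkin-Game-Upper-Value} is a legitimate (and arguably cleaner) alternative to the paper's direct manipulation of the DRBSDE identity \eqref{eq:DRBSDE-Bilteral-Interconnected-Obstacles}-(i),(iii). However there is a genuine gap in your attainment argument. To prove $\essinf_{\beta\in\mathcal{B}^{j}_{s}}J^{i,j}_{s}(\gamma(\overline{\alpha^{*}}(\beta),\beta))\le Y^{i,j}_{s}$ you plug in $\beta=\beta^{*}$, but the infimum is taken over \emph{square-integrable} controls $\mathcal{B}^{j}_{s}$, and Lemma~\ref{Lemma:Joint-Optimal-Control} only gives $\beta^{*}\in\mathsf{B}^{j}_{s}$ with $\gamma(\alpha^{*},\beta^{*})\in\mathcal{G}^{i,j}_{s}$ (joint admissibility), not $\beta^{*}\in\mathcal{B}^{j}_{s}$. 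You cannot recover this from part (i) either: $\beta^{*}=\overline{\beta^{*}}(\alpha^{*})$ and (i) yields $\overline{\beta^{*}}(\alpha)\in\mathcal{B}^{j}_{s}$ only for $\alpha\in\mathcal{A}^{i}_{s}$, while $\alpha^{*}\in\mathcal{A}^{i}_{s}$ is equally unproven --- the argument is circular. The paper avoids this by truncation: for the $\overline{\beta^{*}}$ equality it takes $\alpha^{*}_{k}$, the restriction of $\alpha^{*}$ to its first $k$ switches (trivially in $\mathcal{A}^{i}_{s}$), uses non-anticipativity of $\overline{\beta^{*}}$ to match $\overline{\beta^{*}}(\alpha^{*}_{k})$ with $\overline{\beta^{*}}(\alpha^{*})=\beta^{*}$ up to $\sigma^{*}_{k}$, and lets $k\to\infty$ using $\gamma(\alpha^{*},\beta^{*})\in\mathcal{G}^{i,j}_{s}$ to justify the limit. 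Your proof needs the analogous device.

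A smaller point in (i): you cannot invoke the bound \eqref{eq:Integrate-BSDE-Optimal-Control-3} verbatim for $\overline{\beta^{*}}(\alpha)$ with arbitrary $\alpha$. That bound came from the \emph{equality} \eqref{eq:Integrate-BSDE-Optimal-Control-2}, which in turn used that both players are optimal. When $\alpha$ is arbitrary, on player~1's switching events one has only $Y^{i,j}_{\rho_{1}}\ge Y^{\gamma^{(1)}_{1},j}_{\rho_{1}}-\hat{g}^{i,\gamma^{(1)}_{1}}_{\rho_{1}}$ and the non-negative term $\int_{s}^{\rho_{1}}dK^{i,j,+}_{t}$ cannot be dropped; the identity becomes a one-sided inequality. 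Rearranging gives an upper bound on $\sum_{n}\check{g}^{\gamma^{(2)}_{n-1},\gamma^{(2)}_{n}}_{\rho_{n}}$ that now contains $\sum_{n}\hat{g}^{\xi_{n-1},\xi_{n}}_{\sigma_{n}}$, and it is precisely the hypothesis $\alpha\in\mathcal{A}^{i}_{s}$ that makes this last sum square-integrable. The paper spells out this modification explicitly; your sketch elides it.
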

\begin{proof}
	\mbox{}
	\vskip0.1em
	{\it Proof of (i)}:	We only show $\overline{\beta^{*}} \in \mathscr{B}^{j}_{s}$ since the proof that $\overline{\alpha^{*}} \in \mathscr{A}^{i}_{s}$ follows by similar arguments. Just as in the proof of Lemma \ref{Lemma:Joint-Optimal-Control}, the construction of $\overline{\beta^{*}}(\alpha)$ together with the no free-loop and consistency properties are sufficient to establish that $\overline{\beta^{*}}(\alpha) \in \mathsf{B}_{s}^{j}$ for each $\alpha \in \mathsf{A}_{s}^{i}$. Moreover, $\overline{\beta^{*}}$ satisfies the non-anticipative property in Definition~\ref{Definition:Switching-Strategies} by construction. Let $\alpha \in \mathcal{A}^{i}_{s}$ be given and let $\overline{\beta^{*}}(\alpha) = \beta = (\tau_{n},\zeta_{n})_{n \ge 0} \in \mathsf{B}^{j}_{s}$. To show that this control is square-integrable we will proceed as in the proof of Lemma \ref{Lemma:Joint-Optimal-Control}, to obtain that a.s.,
	\begin{align*}
	Y^{i,j}_{s} & = \int_{s}^{\rho_{1}}f^{i,j}_{t}{d}t + h^{i,j}\mathbf{1}_{\{\rho_{1} = T\}} + Y^{i,j}_{\rho_{1}}\mathbf{1}_{\{\rho_{1} < T\}} + \int_{s}^{\rho_{1}}{d}K^{i,j,+}_{t} - \int_{s}^{\rho_{1}}{d}K^{i,j,-}_{t} - \int_{s}^{\rho_{1}}Z^{i,j}_{t}{d}B_{t}, \nonumber \\
	& \ge \int_{s}^{\rho_{1}}f^{u_{t}}_{t}{d}t + h^{i,j}\mathbf{1}_{\{\rho_{1} = T\}} - \bigl[\hat{g}^{i,\gamma^{(1)}_{1}}_{\rho_{1}} - \check{g}^{j,\gamma^{(2)}_{1}}_{\rho_{1}} \bigr] + Y^{\gamma_{1}}_{\rho_{1}}\mathbf{1}_{\{\rho_{1} < T\}} - \int_{s}^{\rho_{1}}Z^{u_{t}}_{t}{d}B_{t},
	\end{align*}
where, in contrast to the proof of Lemma \ref{Lemma:Joint-Optimal-Control}, here $\alpha$ is arbitrary and so $\gamma^{(1)}_{1}$ is not necessarily optimal at time $\rho_{1}$. This means the inequality $L^{i,j}_{\rho_{1}}(\boldsymbol{Y}) \le Y^{i,j}_{\rho_{1}}$ must be enforced and the non-negative term $\int_{s}^{\rho_{1}}{d}K^{i,j,+}_{t}$ cannot be neglected. Proceeding iteratively for $n = 1,\ldots,N$ it follows that
\begin{align}\label{eq:Upper-Bound-Player-1-Suboptimal}
Y^{i,j}_{s} \ge {} & \int_{s}^{\rho_{N}}f^{u_{t}}_{t}{d}t + \sum_{n = 1}^{N}h^{\gamma_{n-1}}\mathbf{1}_{\{\rho_{n} = T,\; \rho_{n-1} < T\}} - \sum_{n = 1}^{N}\Bigl[\hat{g}^{\gamma^{(1)}_{n-1},\gamma^{(1)}_{n}}_{\rho_{n}} - \check{g}^{\gamma^{(2)}_{n-1},\gamma^{(2)}_{n}}_{\rho_{n}} \Bigr] \nonumber \\
& + Y^{\gamma_{N}}_{\rho_{N}}\mathbf{1}_{\{\rho_{N} < T\}} - \int_{s}^{\rho_{N}}Z^{u_{t}}_{t}{d}B_{t},
\end{align}
from which we obtain	
	\begin{align}\label{eq:Upper-Bound-Player-2-Response}
	\sum_{n = 1}^{N}\check{g}^{\gamma^{(2)}_{n-1},\gamma^{(2)}_{n}}_{\rho_{n}} \le {} & - \int_{s}^{\rho_{N}}f^{u_{t}}_{t}{d}t - \sum_{n = 1}^{N}h^{\gamma_{n-1}}\mathbf{1}_{\{\rho_{n} = T,\; \rho_{n-1} < T\}} + \sum_{n = 1}^{N}\hat{g}^{\gamma^{(1)}_{n-1},\gamma^{(1)}_{n}}_{\rho_{n}} \nonumber \\
	& + Y^{i,j}_{s} - Y^{\gamma_{N}}_{\rho_{N}}\mathbf{1}_{\{\rho_{N} < T\}} + \int_{s}^{\rho_{N}}Z^{u_{t}}_{t}{d}B_{t}.
	\end{align}
	Since $\mathbb{P}(\{\rho_{N} < T\; \forall N \ge 1\}) = 0$ the limits as $N \to \infty$ on both sides of \eqref{eq:Upper-Bound-Player-2-Response} are well defined. As the switching costs are non-negative we have
	\begin{equation}\label{eq:Bounds-Player-2-Response}
	0 \le \sum_{n \ge 1}\check{g}^{\zeta_{n-1},\zeta_{n}}_{\tau_{n}} \le - \int_{s}^{T}f^{u_{t}}_{t}{d}t - h^{u_{T}} + \sum_{n \ge 1}\hat{g}^{\xi_{n-1},\xi_{n}}_{\sigma_{n}} + Y^{i,j}_{s} + \int_{s}^{T}Z^{u_{t}}_{t}{d}B_{t}.
	\end{equation}
	Since $\alpha \in \mathcal{A}^{i}_{s}$, $Y^{i,j} \in \mathcal{S}^{2}_{c}$, $h^{i,j} \in L^{2}(\mathcal{F}_{T})$, and $f^{i,j}$, $Z^{i,j}$ belong to $\mathcal{H}^{2}$ for all $(i,j) \in \Gamma$, the random variable on the right-hand side of \eqref{eq:Bounds-Player-2-Response} belongs to $L^{2}$ and we conclude that the control $\beta$ is square-integrable.
	
	\vskip0.2em
	{\it Proof of (ii)}:
	We only show the first equality in \eqref{eq:BSDE-Solution-Non-anticipative-Strategies} as the second follows via similar arguments.
	We proceed by showing that for every $\alpha \in \mathcal{A}^{i}_{s}$ we have,
	\begin{equation}\label{eq:One-Sided-Verification-Inequality}
	Y^{i,j}_{s} \ge J^{i,j}_{s}\bigl(\gamma(\alpha,\overline{\beta^{*}}(\alpha))\bigr).
	\end{equation}
	Taking conditional expectations in \eqref{eq:Upper-Bound-Player-1-Suboptimal} above we get,
	\begin{align}\label{eq:One-Sided-Verification-N-th-Step}
	Y^{i,j}_{s} \ge {} & \mathbb{E}\left[\int_{s}^{\rho_{N}}f^{u_{t}}_{t}{d}t + \sum_{n = 1}^{N}h^{\gamma_{n-1}}\mathbf{1}_{\{\rho_{n} = T,\; \rho_{n-1} < T\}} - \sum_{n = 1}^{N}\Bigl[\hat{g}^{\gamma^{(1)}_{n-1},\gamma^{(1)}_{n}}_{\rho_{n}} - \check{g}^{\gamma^{(2)}_{n-1},\gamma^{(2)}_{n}}_{\rho_{n}} \Bigr] \Bigm \vert \mathcal{F}_{s}\right] \nonumber \\
	& + \mathbb{E}\left[Y^{\gamma_{N}}_{\rho_{N}}\mathbf{1}_{\{\rho_{N} < T\}} \vert \mathcal{F}_{s}\right].
	\end{align}
	Using {\it (i)} above we have $\gamma\bigl(\alpha,\overline{\beta^{*}}(\alpha)\bigr) \in \mathcal{G}^{i,j}_{s}$, so taking the limit $N \to \infty$ in \eqref{eq:One-Sided-Verification-N-th-Step} proves the inequality \eqref{eq:One-Sided-Verification-Inequality}.
	
	Next, for each integer $k \ge 0$ let $\alpha^{*}_{k}$ denote the truncation of the control $\alpha^{*}$ from Lemma~\ref{Lemma:Joint-Optimal-Control} to the first $k$ switches: $\alpha^{*}_{k} = \bigl(\sigma^{*}_{n},\xi^{*}_{n})_{0 \le n \le k}$ with $(T,\xi^{*}_{k})$ appended. Then $\alpha^{*}_{k} \in \mathcal{A}^{i}_{s}$ for each $k$ and $J^{i,j}_{s}\bigl(\gamma\bigl(\alpha^{*}_{k},\overline{\beta^{*}}(\alpha^{*}_{k})\bigr)\bigr) \to J^{i,j}_{s}\bigl(\gamma\bigl(\alpha^{*},\overline{\beta^{*}}(\alpha^{*})\bigr)\bigr)$ by the non-anticipative properties of $\overline{\beta^{*}}$ and as $\gamma\bigl(\alpha^{*},\overline{\beta^{*}}(\alpha^{*})\bigr) \in \mathcal{G}^{i,j}_{s}$. The claim
	\[
	\esssup_{\alpha \in \mathcal{A}^{i}_{s}} J^{i,j}_{s}\bigl(\gamma(\alpha,\overline{\beta^{*}}(\alpha))\bigr) = Y^{i,j}_{s},
	\]
	is then proved by passing to the limit $k \to \infty$ in,
	\[
	J^{i,j}_{s}\bigl(\gamma\bigl(\alpha^{*}_{k},\overline{\beta^{*}}(\alpha^{*}_{k})\bigr)\bigr) \le \esssup_{\alpha \in \mathcal{A}^{i}_{s}} J^{i,j}_{s}\bigl(\gamma(\alpha,\overline{\beta^{*}}(\alpha))\bigr) \le Y^{i,j}_{s},
	\]
	and using Lemma~\ref{Lemma:Joint-Optimal-Control}.
\end{proof}

\begin{proof}[Proof of Theorem~\ref{Theorem:Verification}]
	\mbox{}
\vskip0.1em
{\it Proof of (i) and (ii)}:
By construction we have $\alpha^{*} = \overline{\alpha^{*}}(\beta^{*})$ and $\beta^{*} = \overline{\beta^{*}}(\alpha^{*})$ so that, by
Lemma~\ref{Lemma:Joint-Optimal-Control},
\begin{equation}\label{eq:Equality-Decomposed-Control}
Y^{i,j}_{s} = J^{i,j}_{s}\bigl(\gamma(\alpha^{*},\beta^{*})\bigr) =  J^{i,j}_{s}\bigl(\gamma(\overline{\alpha^{*}}(\beta^{*}),\beta^{*})\bigr) = J^{i,j}_{s}\bigl(\gamma(\alpha^{*},\overline{\beta^{*}}(\alpha^{*}))\bigr),
\end{equation}
and by Lemma~\ref{Lemma:One-Sided-Optimal-Non-anticipative-Strategy-Player-2},
\[
\esssup_{\alpha \in \mathcal{A}^{i}_{s}} J^{i,j}_{s}\bigl(\gamma(\alpha,\overline{\beta^{*}}(\alpha))\bigr) = Y^{i,j}_{s} = \essinf_{\beta \in \mathcal{B}^{j}_{s}} J^{i,j}_{s}\bigl(\gamma(\overline{\alpha^{*}}(\beta),\beta)\bigr).
\]
Since $\overline{\beta^{*}}(\alpha) \in \mathcal{B}^{j}_{s}$ for every $\alpha \in \mathcal{A}^{i}_{s}$ and $\overline{\alpha^{*}}(\beta) \in \mathcal{A}^{i}_{s}$ for every $\beta \in \mathcal{B}^{j}_{s}$, almost surely we have,
\[
\hat{V}^{i,j}_{s} \coloneqq \essinf_{\beta \in \mathcal{B}^{j}_{s}}\esssup_{\alpha \in \mathcal{A}^{i}_{s}} J^{i,j}_{s}\bigl(\gamma(\alpha,\beta)\bigr) \le Y^{i,j}_{s} \le \esssup_{\alpha \in \mathcal{A}^{i}_{s}}\essinf_{\beta \in \mathcal{B}^{j}_{s}} J^{i,j}_{s}\bigl(\gamma(\alpha,\beta)\bigr) \eqqcolon \check{V}^{i,j}_{s},
\]
which completes the proof since $\hat{V}^{i,j}_{s} \ge \check{V}^{i,j}_{s}$ a.s.
\vskip0.2em
{\it Proof of (iii)}:
For all $\overline{\alpha} \in \mathscr{A}^{i}_{s}$ we have a.s.,
\begin{align*}
\essinf\limits_{\beta \in \mathcal{B}^{j}_{s}} J^{i,j}_{s}\bigl(\gamma(\overline{\alpha}(\beta),\beta)\bigr) & \le \essinf\limits_{\beta \in \mathcal{B}^{j}_{s}}\esssup_{\alpha \in \mathcal{A}^{i}_{s}}J^{i,j}_{s}\bigl(\gamma(\alpha,\beta)\bigr) \\
& = Y^{i,j}_{s} = \essinf\limits_{\beta \in \mathcal{B}^{j}_{s}} J^{i,j}_{s}\bigl(\gamma(\overline{\alpha^{*}}(\beta),\beta)\bigr),
\end{align*}
and the corresponding statement for $\overline{\beta^{*}}$ is proved analogously. Since $\overline{\alpha^{*}} \in \mathscr{A}^{i}_{s}$ and $\overline{\beta^{*}} \in \mathscr{B}^{j}_{s}$ the proof is complete.
\end{proof}

\begin{remark}
	In proving Theorem~\ref{Theorem:Verification} we established the following. For players 1 and 2 respectively there exist {\it non-anticipative strategies} $\overline{\alpha^{*}}$ and $\overline{\beta^{*}}$ as well as {\it controls} $\alpha^{*}$ and $\beta^{*}$ which satisfy the following,
	\begin{itemize}
		\item the controls $\alpha^{*}$, $\beta^{*}$ and non-anticipative strategies $\overline{\alpha^{*}}$, $\overline{\beta^{*}}$ are related by $\alpha^{*} = \overline{\alpha^{*}}(\beta^{*})$ and $\beta^{*} = \overline{\beta^{*}}(\alpha^{*})$;
		\item $\alpha^{*}$ and $\beta^{*}$ are jointly admissible;
		\item when player $2$ (the minimiser) uses the non-anticipative strategy $\overline{\beta^{*}}$, then the use of the control $\alpha^{*}$ by player 1 (the maximiser) gives the maximum possible value for the switching game over all  controls $\alpha$ such that $(\alpha,\overline{\beta^{*}}(\alpha))$ is jointly admissible, including all square-integrable controls $\alpha$;
		\item when player $1$ uses the non-anticipative strategy $\overline{\alpha^{*}}$, then the use of the control $\beta^{*}$ by player 2 gives the minimum possible value for the switching game over all controls $\beta$ such that $(\overline{\alpha^{*}}(\beta),\beta)$ is jointly admissible, including all square-integrable controls $\beta$;
		\item the strategies $\overline{\alpha^{*}}$ and $\overline{\beta^{*}}$ are best responses in the robust sense \cite{Krasovskii1988, Ball2002, Bayraktar2016robust}.
	\end{itemize}
\end{remark}

Let us emphasise that $\overline{\alpha^{*}}$ is not necessarily a best response strategy in the sense,
\[
J^{i,j}_{s}\bigl(\gamma(\overline{\alpha^{*}}(\beta),\beta)\bigr) = \esssup_{\alpha \in \mathcal{A}^{i}_{s}}J^{i,j}_{s}\bigl(\gamma(\alpha,\beta)\bigr) \quad \forall \beta \in \mathcal{B}^{j}_{s}, \\
\]
and correspondingly for $\overline{\beta^{*}}$. In the game with initial data $(s,i,j)$, for player 1 we can define a mapping $\overline{\alpha} \colon \mathcal{B}^{j}_{s} \to \mathcal{A}^{i}_{s}$ such that for each $\beta \in \mathcal{B}^{j}_{s}$ a.s.,
\[
J^{i,j}_{s}\bigl(\gamma(\overline{\alpha}(\beta),\beta)\bigr) \ge J^{i,j}_{s}\bigl(\gamma(\alpha,\beta)\bigr)\; \text{a.s.} \quad \forall \alpha \in \mathcal{A}^{i}_{s},
\]
but this mapping is generally not non-anticipative since its output $\overline{\alpha}(\beta)$ can depend on the entire trajectory corresponding to the input $\beta$. For example, define the following objective for player 1,
\begin{align}\label{eq:Player-1-Non-Anticipative-Objective}
\tilde{J}_{s}^{i}(\alpha;\beta) & = \mathbb{E}\left[\int_{s}^{T}\tilde{f}^{u^{(1)}_{t}}_{t}{d}t - \sum_{n=1}^{\infty}\hat{g}^{\xi_{n-1},\xi_{n}}_{\sigma_{n}} + \tilde{h}^{u^{(1)}_{T}}_{s,T} \Bigm \vert \mathcal{F}_{s}\right], \quad \alpha \in \mathcal{A}^{i}_{s}, \enskip \beta \in \mathcal{B}, \\
\tilde{V}_{s}^{i}(\beta) & = \esssup\limits_{\alpha \in \mathcal{A}_{s}^{i}}\tilde{J}_{s}^{i}(\alpha;\beta),\nonumber
\end{align}
where $u^{(k)}$, defined analogously to \eqref{eq:Optimal-Switching-Indicator-Continuous-Time}, indicates the current mode selected by player $k = 1,2$, and for $i \in \Gamma^{1}$ and $t \in [s,T]$, $\tilde{f}^{i}_{t} \coloneqq f^{i,u^{(2)}_{t}}_{t}$ and $\tilde{h}^{i}_{s,T} \coloneqq h^{i,u^{(2)}_{T}} + \sum_{n=1}^{\infty}\check{g}^{\zeta_{n-1},\zeta_{n}}_{\tau_{n}}\mathbf{1}_{\{\tau_{n} \ge s\}}$.
Using the results in \cite{Martyr2014b,Djehiche2017}, we can prove the existence of value processes $\bigl(\tilde{V}_{t}^{i}(\beta)\bigr)_{s \le t \le T}$, $i \in \Gamma^{1}$, and an optimal control in $\mathcal{A}^{i}_{s}$ for each $i \in \Gamma^{1}$. The non-anticipativity issue arises from the dependence of \eqref{eq:Player-1-Non-Anticipative-Objective} on the expected future rewards due to player 2's switching decisions.

\subsection{The diffusion framework}
%%% STOPPED HERE %%%
Recall the process $X^{s,x}$ introduced in \eqref{sdex} where $(s,x)\in [0,T] \times \mathbb{R}^{k}$. Suppose that $b$ and $\sigma$ are deterministic continuous functions with values in $\mathbb{R}^{k}$ and $\mathbb{R}^{k \times d}$ respectively, Lipschitz with respect to $x$ uniformly in $t$. Consequently, the process $X^{s,x}$  exists and is unique (see \cite{karatzas-shreve}). Next assume that for any $(i,j)\in \Gamma$, $k\in \Gamma^1$, $\ell \in \Gamma^2$ and $t \in [s,T]$, 
$$
f^{i,j}_{t} = \bar f^{i,j}(t,X^{s,x}_t), h^{i,j}=\bar  h^{i,j}(X^{s,x}_T), \hat g^{i,k}_t= \hat {\bar  g}^{i,k}(t,X^{s,x}_t) \mbox{ and }\check g^{j,\ell}_t= \check {\bar g}^{j,\ell}(t,X^{s,x}_t) 
$$
where the functions $\bar f^{i,j}, \bar  h^{i,j}, \hat {\bar g}^{i,k} \mbox{ and }\check {\bar  g}^{j,\ell}$ are deterministic, continuous and of polynomial growth with respect to $x$. We then have:
\begin{theorem}[see \cite{Djehiche2017}] Assume that:

a) the functions $\bar  h^{i,j}$, $\hat {\bar  g}^{i,k}$ and $\check {\bar g}^{j,\ell}$, $(i,j)\in \Gamma$, $k\in \Gamma^1$, $\ell \in \Gamma^2$, verify the properties of positivity, consistency and non-free loop of Assumption 1.

b) The functions $\check {\bar g}^{j,\ell}$, $j,\ell\in \Gamma^2$ or $\hat {\bar g}^{i,k}$, $i,k\in \Gamma^1$ 
are ${\cal C}^{1,2}$ and their derivatives are of polynomial growth.
\medskip 

\noindent Then there exists a system of processes $(Y^{i,j},Z^{i,j},K^{i,j})_{(i,j)\in \Gamma}$ which satisfy \eqref{eq:DRBSDE-Bilteral-Interconnected-Obstacles} on $[s,T]$, and for any $(i,j)\in \Gamma$, $Y^{i,j}_t$, $t\in [s,T]$ verifies \eqref{eq:Verification-Ansatz} and \eqref{eq:Switching-Game-Value}. Moreover, there also exist deterministic continuous functions with polynomial growth $(v^{i,j}(s,x))_{(i,j)\in \Gamma}$ such that for any $(i,j)$ and $t\in [s,T]$, 
$$
Y^{i,j}_t=v^{i,j}(t,X^{s,x}_t)
$$ 
and $(v^{i,j}(s,x))_{(i,j)\in \Gamma}$ is the unique solution in viscosity sense of system \eqref{mainsyst-vi}.
\end{theorem}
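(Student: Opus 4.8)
The plan is to split the statement into its two logically distinct components: the existence, Feynman--Kac representation and viscosity characterisation, which are precisely the contribution of \cite{Djehiche2017}, and the identification of $Y^{i,j}$ with the value of the zero-sum switching game, which follows from Theorem~\ref{Theorem:Verification} of the present paper.

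First I would invoke \cite{Djehiche2017} directly. Under hypotheses (a) and (b), that reference guarantees the existence of the system $(Y^{i,j},Z^{i,j},K^{i,j})_{(i,j)\in\Gamma}$ solving \eqref{eq:DRBSDE-Bilteral-Interconnected-Obstacles} on $[s,T]$, the existence of deterministic continuous functions $v^{i,j}$ of polynomial growth with $Y^{i,j}_t=v^{i,j}(t,X^{s,x}_t)$, and the fact that $(v^{i,j})_{(i,j)\in\Gamma}$ is the unique viscosity solution of the system \eqref{mainsyst-vi}. No further argument is needed for these claims beyond confirming that the standing hypotheses of that reference coincide with (a)--(b).

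Next, to obtain the game-value interpretation --- namely that $Y^{i,j}_t$ verifies the ansatz \eqref{eq:Verification-Ansatz} and the equality \eqref{eq:Switching-Game-Value} --- I would verify that the probabilistic data generated by the diffusion framework satisfy the hypotheses of Theorem~\ref{Theorem:Verification}, and then apply that theorem. This reduces to two checks. For integrability, the polynomial growth of $\bar f^{i,j},\bar h^{i,j},\hat{\bar g}^{i,k},\check{\bar g}^{j,\ell}$ combined with the standard moment estimates $\mathbb{E}\bigl[\sup_{s\le t\le T}|X^{s,x}_t|^p\bigr]<\infty$ for all $p\ge 1$, valid for the solution of \eqref{sdex} under the Lipschitz assumptions on $b$ and $\sigma$, yields $f^{i,j}\in\mathcal{H}^2$, $h^{i,j}\in L^2(\mathcal{F}_T)$ and $\hat{g}^{i,k},\check{g}^{j,\ell}\in\mathcal{S}^2_c$; the path-continuity of the switching costs follows from continuity of the coefficient functions together with continuity of $t\mapsto X^{s,x}_t$. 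For the structural conditions, hypothesis (a) asserts the positivity, consistency and non-free-loop properties of Assumption~\ref{Assumption:Switching-Costs} at the level of the deterministic functions, pointwise in $(t,x)$; composing with $X^{s,x}_t$ then transfers these to the a.s.\ inequalities \eqref{eq:SwitchingCostNoArbitrage-1}, the terminal ordering, and the non-free-loop condition \eqref{eq:SwitchingCostNoArbitrage-2}. With both checks in hand, Theorem~\ref{Theorem:Verification} applies at each initial time $t\in[s,T]$ and gives $Y^{i,j}_t=V^{i,j}_t$ a.s., which is exactly \eqref{eq:Verification-Ansatz} and \eqref{eq:Switching-Game-Value}.

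I expect the only genuinely delicate point to be the transfer of the \emph{strict} pointwise inequalities in hypothesis (a) into the almost-sure process inequalities of Assumption~\ref{Assumption:Switching-Costs}. The subtlety is that the strict consistency and non-free-loop conditions must survive composition with the possibly degenerate diffusion $X^{s,x}$: requiring them to hold for every point $x$ in the state space (not merely almost everywhere) guarantees that, for a.e.\ $\omega$ and every $t$, they hold at $X^{s,x}_t(\omega)$. Everything else is a routine application of polynomial-growth bounds and the verification theorem established above.
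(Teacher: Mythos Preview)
Your proposal is correct and matches the paper's approach: the paper provides no explicit proof of this theorem, simply citing \cite{Djehiche2017} in the theorem header for the existence, Feynman--Kac and viscosity claims, and relying implicitly on Theorem~\ref{Theorem:Verification} for the game-value identification \eqref{eq:Verification-Ansatz}--\eqref{eq:Switching-Game-Value}. Your decomposition into these two parts, together with the verification that the diffusion data satisfy the integrability and structural hypotheses needed to invoke Theorem~\ref{Theorem:Verification}, is exactly the intended argument and in fact supplies more detail than the paper itself.
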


%\bibliographystyle{vancouver}
%\bibliography{references}

\end{document}